\DeclareFontFamily{OML}{script}{}
\DeclareFontShape{OML}{script}{m}{it}
{ <5-20> rsfs10 }{}
\DeclareMathAlphabet{\mathcalipt}{OML}{script}{m}{it}
\renewcommand{\mathcal}[1]{{\mathcalipt #1}\hspace{0.2ex}}
\newcommand{\red}{\color{red}\huge}
\newcommand{\dif}{\mathrm{d}}
\newcommand{\p}{\partial}
\newcommand{\xx}{{\bm x}}
\newcommand{\y}{{\bm y}}
\newcommand{\re}[1]{\mbox{\rm$($\ref{#1}$)$}}
\newcommand{\Rmnum}[1]{\uppercase\expandafter{\romannumeral #1}}
\newtheorem{thm}{Theorem}[section]
\newtheorem{lem}{Lemma}[section]
\newtheorem{rem}{Remark}[section]
\def\theequation{\arabic{section}.\arabic{equation}}
\numberwithin{equation}{section}
\begin{document}
\title{Convergence analysis of neural networks for solving a free boundary problem}
\author{Xinyue Evelyn Zhao}
\address{Department of Applied and Computational Mathematics and Statistics,\\
University of Notre Dame, Notre Dame, IN 46556, USA\\
xzhao6@nd.edu}

\author{Wenrui Hao}\address{Department of Mathematics, 
The Pennsylvania State University, \\
University Park, PA 16802, USA\\
wxh64@psu.edu}

\author{Bei Hu}
\address{Department of Applied and Computational Mathematics and Statistics,\\
University of Notre Dame, Notre Dame, IN 46556, USA\\
b1hu@nd.edu}

\begin{abstract}
Free boundary problems deal with systems of partial differential equations,
where the domain boundaries are apriori unknown. Due to this special characteristic, it is challenging to solve the free boundary problems either theoretically or numerically.  In this paper, we develop a novel approach for solving a modified Hele-Shaw problem based on the neural network discretization. The existence of the numerical solution with this discretization is established theoretically. We also numerically verify this approach by computing the symmetry-breaking solutions which are guided by the bifurcation analysis near the radially-symmetric branch. Moreover, we further verify the capability of this approach  by computing some non-radially symmetric solutions which are not characterized by any theorems. 
\end{abstract}
\maketitle

\section{Introduction}

Many mathematical models of natural phenomena, e.g., biology, physics and materials science, involve the solutions of systems of
partial differential equations (PDEs) with free (moving) boundaries \cite{F1,FH,HCF,HF,HHHS}. Among these free boundary problems, the generalized Hele-Shaw problem with surface tension is the most popular and widely-studied problem with various applications ranging from physics to biology \cite{F1,friedman2012variational}. This problem has attracted extensive experimental
and mathematical studies since the initial work of Saffman and
Taylor in 1958 \cite{ST}, based on the experimental innovation of
confining a fluid between two closely-spaced plates by Hele-Shaw
\cite{HSH}.
 From a mathematical point of view, studies
of this problem can be formulated both numerically and theoretically to focus on the solutions and their structures \cite{chen2003free,dibenedetto1984ill}. In the
last few decades,  generalized Hele-Shaw problems with surface tension have been formulated from biological and physical modeling \cite{constantin1993global,F1}. Thus theories and nonlinear simulations of these problems have been developed to understand the structure of steady-state solutions. Although the PDE theory can help in some
special cases, the in-depth study of these problems often requires large-scale simulations including numerically computing steady-state solutions \cite{HHHMS,HHHS}. Efficient numerical methods for computing the steady-state solutions \cite{wang2018two}, bifurcations \cite{HHHLSZ}, and stability are keys to
understanding these systems. Underlying all these is the common grand challenge of developing efficient numerical algorithms for complex PDE systems with free (moving) boundaries.

Recently, there are several numerical methods developed for studying the generalized Hele-Shaw problem with surface tension, e.g.,  computing
multiple steady-states by coupling multi-grid and domain decomposition techniques with numerical algebraic geometry \cite{HHHMS,HHHS,HHS1,wang2018two}; detecting bifurcation points by using the adaptive homotopy tracking method \cite{HHHLSZ,hao2017homotopy,HZ}; and exploring their global solution
structures based on PDE theories \cite{HHHLSZ,HHHS,HHLS}. These numerical methods have also been successfully applied
to some complex biological networks including tumor growth model and cardiovascular disease risk evaluation \cite{HCF,HF}. We also analyzed the boundary integral method on a simplified Hele-Shaw problem without the surface tension term \cite{HHLS} and provided a rigorous convergence analysis.
However, to the date, there are still several numerical challenges for solving the generalized Hele-Shaw problem with surface tension:
\begin{itemize}
  \item[1)] there is lack of rigorous theoretical analysis of
numerical methods for free boundary problems with the surface tension;
  \item[2)] and steady-state solution patterns are hard to compute so that
the global solution structure is unclear.
\end{itemize}  Therefore,
efficient numerical methods, rigorous theoretical analysis of these
numerical methods and global solution structures are
 needed to deeply study the generalized Hele-Shaw problem.

Machine learning has been experiencing an extraordinary resurgence in many
important artificial intelligence applications since the
late 2000s. In particular, it has been able to produce
state-of-the-art accuracy in computer vision \cite{sebe2005machine}, video
analysis \cite{camastra2015machine}, natural language processing \cite{indurkhya2010handbook} and
speech recognition \cite{amodei2016deep}. Recently, interest in machine learning based approaches in the applied mathematics community has increased rapidly \cite{bright2013compressive,raissi2017machine}. This
growing enthusiasm for machine learning  stems from  massive amounts of data available from scientific computations \cite{dillenbourg1999collaborative} and other sources \cite{durrleman2014morphometry}; the design of efficient data
analysis algorithms \cite{yang2015oscillatory}; advances in high-performance computing; and the data-driven modeling \cite{marsland2015machine}. In order to take advantage of machine learning, we will develop a new approach for solving free boundary problems and address the current challenges in this area.

\section{The free boundary problem and bifurcation analysis}
\subsection{The model problem}
The classical Hele-Shaw problem seeks a fluid domain $\Omega(t)\in \mathbb{R}^2$ and the fluid pressure $\sigma$ such that
\begin{equation}\label{mwa}
\left\{
    \begin{array}{ll}
    \Delta \sigma = 0 \hspace{2em}    &\text{in }\Omega(t),\\
    \sigma = \kappa \hspace{2em}  &\text{on } \p \Omega(t),\\
    V_n = -\frac{\p \sigma}{\p {\bm n}} \hspace{2em}    &\text{on }\p \Omega(t),
    \end{array}
    \right.
\end{equation}
where $\kappa$ denotes the curvature of $\p \Omega(t)$ ($\kappa = \frac1R$ if $\p \Omega(t)$ is a circle of radius $R>0$); and $V_n$ is the velocity of the fluid boundary $\p \Omega(t)$ in the outward normal direction ${\bf n}$.

It is well-known that model \re{mwa} possesses only radially symmetric stationary solution. In order to investigate the complexity of free boundaries, we introduce a modified Hele-Shaw model as below:
\begin{equation}\label{model1}
\left\{
    \begin{array}{ll}
    -\Delta \sigma = c(-\sigma - \mu) \hspace{2em} &\text{in } \Omega(t),\\
    \sigma = \kappa \hspace{2em} &\text{on } \p \Omega(t),\\
    V_n = -\frac{\p \sigma}{\p {\bm n}} + \beta \hspace{2em} &\text{on } \p \Omega(t),
    \end{array}
    \right.
\end{equation}
where $c, \mu,\beta >0$. The first equation on the right-hand side of
\re{model1} represents a sink of fluid, while the additional constant $\beta$ represents the influx of fluid in addition to the balance of mass. When $c=\beta = 0$, \re{model1} reverts to the classical Hele-Shaw problem. By introducing the non-dimensional length scale $L_D = \sqrt{c}$, we define:
$$\tilde{\xx} = L_D \xx,\hspace{2em} \tilde{\sigma}(\tilde{\xx}) = \sigma(\xx) + \mu, \hspace{2em} \tilde{\Omega}(t) = L_D \Omega(t), \hspace{2em} \tilde{\beta} = \frac{\beta}{L_D}.$$
After dropping the $\sim$ in the above variables, the non-dimensional model takes the following form
\begin{equation}\label{model12}
\left\{
    \begin{array}{ll}
    -\Delta \sigma = -\sigma \hspace{2em} &\text{in } \Omega(t),\\
    \sigma = \mu + \kappa \hspace{2em} &\text{on } \p \Omega(t),\\
    V_n = -\frac{\p \sigma}{\p {\bm n}} + \beta \hspace{2em} &\text{on } \p \Omega(t).
    \end{array}
    \right.
\end{equation}

We consider the steady state system of \re{model12} by setting $V_n=0$ and obtain the following stationary system:
\begin{equation}\label{model2}
\left\{
    \begin{array}{ll}
    -\Delta \sigma = -\sigma \hspace{2em} &\text{in } \Omega,\\
    \sigma = \mu + \kappa \hspace{2em} &\text{on } \p \Omega,\\
    \frac{\p \sigma}{\p {\bm n}} = \beta \hspace{2em} &\text{on } \p \Omega.
    \end{array}
    \right.
\end{equation}
Theoretically, system \re{model2} admits a unique radially symmetric solution $\sigma_S(r)$ with radius $r=R_S$:
\begin{equation}\label{sigmaS}
    \sigma_S(r) = \Big(\mu+\frac1{R_S}\Big) \frac{I_0(r)}{I_0(R_S)},
\end{equation}
provided that $\beta = \beta(\mu,R_S)$ is given by
\begin{equation}\label{beta}
\beta = \Big(\mu+\frac1{R_S}\Big) \frac{I_1(R_S)}{I_0(R_S)}.
\end{equation}
Here $I_n(r)$ is the modified Bessel function for integer $n\ge 0$.

We are more interested in finding the non-radially symmetric solutions of system \re{model2}. Particularly, we would like to know what the boundaries look like in non-radially symmetric case. In this section, we shall carry out a theoretical bifurcation analysis by using the Crandall Rabinowitz Theorem (Theorem \ref{bifurthm} in Appendix) to show there exists branches of symmetry-breaking solutions to system \re{model2}; and in the next section, we will propose a new method, which is a combination of boundary integral method (BIM) and machine learning approximation, to numerically derive the shapes of the boundaries of system \re{model2}.

\subsection{Bifurcation results}
To begin with, we consider a family of domains with perturbed boundaries in polar coordinates
\begin{equation*}
    \p \Omega_\varepsilon\,:\, r=R_S + \tilde{R}(\theta) = R_S + \varepsilon S(\theta).
\end{equation*}
Let $\sigma$ be the solution of the system
\begin{equation}\label{model3}
\left\{
    \begin{array}{ll}
    -\Delta \sigma = -\sigma \hspace{2em} &\text{in } \Omega_\varepsilon,\\
    \sigma = \mu + \kappa \hspace{2em} &\text{on } \p \Omega_\varepsilon,
    \end{array}
    \right.
\end{equation}
and define $\mathcal{F}$ by
\begin{equation}
    \label{mathcalF}
    \mathcal{F}(\tilde{R},\mu) = \frac{\p \sigma}{\p {\bm n}}\Big|_{\p \Omega_\varepsilon} - \beta.
\end{equation}
Based on system \re{model2}, 
$\sigma$ is a symmetry-breaking solution of system \re{model2} if and only if $\mathcal{F}(\tilde{R},\mu) = 0$. 

Following \cite{angio,Fengjie,FH3,Hongjing,Zejia,zhao2}, it can be established that $\sigma$ admits the following expansion
\begin{equation}\label{expand}
    \sigma = \sigma_S + \varepsilon \sigma_1 + O(\varepsilon^2),
\end{equation}
where $\sigma_1$ is the solution to the linearized system ($B_{R_S}$ denotes the disk centered at 0 with radius $R_S$)
\begin{equation}\label{linear}
\left\{
    \begin{array}{ll}
    -\Delta \sigma_1 = -\sigma_1 \hspace{2em} &\text{in } B_{R_S},\\
    \sigma_1 = -\frac{1}{R_S^2}(S+S_{\theta\theta}) - \frac{\p \sigma_S(R_S)}{\p r} S \hspace{2em} &\text{on } \p B_{R_S}.
    \end{array}
    \right.
\end{equation}
We then substitute \re{expand} into \re{mathcalF} to obtain
\begin{eqnarray*}
    \mathcal{F}(\tilde{R},\mu) &=& \frac{\p \sigma_S(R_S)}{\p r} +  \frac{\p^2 \sigma_S(R_S)}{\p r^2}\varepsilon S(\theta) +  \frac{\p \sigma_1(R_S,\theta)}{\p r}\varepsilon -\beta + O(\varepsilon^2)\\
    &=& \mathcal{F}(0,\mu) + \varepsilon \Big(\frac{\p^2 \sigma_S(R_S)}{\p r^2}S(\theta) + \frac{\p \sigma_1(R_S,\theta)}{\p r}\Big) + O(\varepsilon^2),
\end{eqnarray*}
which formally gives the Fr\'echet derivative of $\mathcal{F}$ as: 
\begin{equation}
    \label{FreD}
    [\mathcal{F}_{\tilde{R}}(0,\mu)]S(\theta) = \frac{\p^2 \sigma_S(R_S)}{\p r^2}S(\theta) + \frac{\p \sigma_1(R_S,\theta)}{\p r}.
\end{equation}
In what follows, we shall use \re{FreD} to establish the bifurcation points by verifying the regularity and the four assumptions in the Crandall-Rabinowitz Theorem.

Like in \cite{zhao2}, we introduce the Banach spaces:
\begin{gather*}
    X^{l+\alpha} = \{S\in C^{l+\alpha}(B_1), S \text{ is $2\pi$-periodic in $\theta$}\},\nonumber\\
    \label{Banach}
    X^{l+\alpha}_2 = \text{closure of the linear space spanned by $\{\cos(n\theta),n=0,2,4,\cdots\}$ in $X^{l+\alpha}$},
\end{gather*}
and set the perturbation $S(\theta) = \cos(n\theta)$. Using a separation of variables, we seek a solution of the form
\begin{equation}\label{s1}
    \sigma_1(r,\theta) = \sigma_1^n(r)\cos(n\theta).
\end{equation}
Based on \re{linear}, $\sigma_1^n$ satisfies
\begin{equation}\label{sigma1n}
\left\{
    \begin{array}{ll}
    -\frac{\p^2 \sigma_1^n}{\p r^2} - \frac1r \frac{\p \sigma_1^n}{\p r} + \frac{n^2}{r^2}\sigma_1^n = -\sigma_1^n \hspace{2em} &\text{in } B_{R_S},\\
    \sigma_1^n = \frac{n^2-1}{R_S^2} - \frac{\p \sigma_S(R_S)}{\p r} \hspace{2em} &\text{on } \p B_{R_S},
    \end{array}
    \right.
\end{equation}
and is explicitly given by
\begin{equation}
    \label{s1n}
    \sigma_1^n(r) = \Big[\frac{n^2-1}{R_S^2} - \frac{\p \sigma_S(R_S)}{\p r}\Big]\frac{I_n(r)}{I_n(R_S)} = \Big[ \frac{n^2-1}{R_S^2} - \Big(\mu + \frac1{R_S}\Big)\frac{I_1(R_S)}{I_0(R_S)}\Big]\frac{I_n(r)}{I_n(R_S)}.
\end{equation}
Substituting \re{sigmaS}, \re{s1}, and \re{s1n} into \re{FreD}, we obtain
\begin{equation}
\label{FreD1}
\begin{split}
    [\mathcal{F}_{\tilde{R}}(0,\mu)]\cos(n\theta) &\,=\, \bigg[\Big(\mu + \frac1{R_S}\Big) \frac{I_1'(R_S)}{I_0(R_S)} + \Big[ \frac{n^2-1}{R_S^2} -\Big(\mu + \frac1{R_S}\Big)\frac{I_1(R_S)}{I_0(R_S)}\Big]\frac{I_n'(R_S)}{I_n(R_S)}    \bigg]\cos(n\theta)\\
    &\,=\, \bigg[-\mu \frac{I_1(R_S)}{I_0(R_S)}\Big(\frac{I_n'(R_S)}{I_n(R_S)}-\frac{I_1'(R_S)}{I_1(R_S)}\Big) \\
    &\hspace{5.2em}+ \frac{n^2-1}{R_S^2}\frac{I_n'(R_S)}{I_n(R_S)} - \frac{I_1(R_S)}{R_S I_0(R_S)}\Big(\frac{I_n'(R_S)}{I_n(R_S)}-\frac{I_1'(R_S)}{I_1(R_S)}\Big) \bigg]\cos(n\theta).
    \end{split}
\end{equation}
It follows that for $n\neq 1$, $[\mathcal{F}_{\tilde{R}}(0,\mu)]\cos(n\theta)=0$ if and only if 
\begin{equation}\label{bifurps}
    \begin{split}
    \mu &= \mu_n(R_S) \\ 
    &\,\triangleq\, \Big[ \frac{n^2-1}{R_S^2}\frac{I_n'(R_S)}{I_n(R_S)} - \frac{I_1(R_S)}{R_S I_0(R_S)}\Big(\frac{I_n'(R_S)}{I_n(R_S)}-\frac{I_1'(R_S)}{I_1(R_S)}\Big)\Big]\Big/ \Big[ \frac{I_1(R_S)}{I_0(R_S)}\Big(\frac{I_n'(R_S)}{I_n(R_S)}-\frac{I_1'(R_S)}{I_1(R_S)}\Big)\Big]\\
    &\,=\, -\frac{1}{R_S} + \frac{I_0(R_S)}{R_S^2 I_1(R_S)} \cdot
    \Big[(n^2-1)\frac{I_n'(R_S)}{I_n(R_S)}\Big]\Big/\Big[\frac{I_n'(R_S)}{I_n(R_S)}-\frac{I_1'(R_S)}{I_1(R_S)}\Big]\\
    &\,=\, -\frac{1}{R_S} + \frac{I_0(R_S)}{R_S^2 I_1(R_S)}\cdot \frac{I_n'(R_S)}{I_n(R_S)}  \Big/\Big[ \frac{1}{n^2-1}\Big(\frac{I_n'(R_S)}{I_n(R_S)}-\frac{I_1'(R_S)}{I_1(R_S)} \Big)\Big].
    \end{split}  
\end{equation}
In order to analyze $\mu_n$, we recall two inequalities from \cite{borisovich}, namely,
\renewcommand{\theequation}{[7,(A.1)]}
\begin{equation}
\frac{I_{n+1}'(r)}{I_{n+1}(r)} > \frac{I_n'(r)}{I_n(r)}\hspace{2em} \text{for all $n\ge0$ and $r>0$},
\end{equation}
\addtocounter{equation}{-1}
\renewcommand{\theequation}{\thesection.\arabic{equation}}
\renewcommand{\theequation}{[7,(A.7)]}
\begin{equation}
\frac{1}{n^2-1}\Big(\frac{I_n'(r)}{I_n(r)}-\frac{I_1'(r)}{I_1(r)}\Big) > \frac{1}{(n+1)^2-1}\Big(\frac{I_{n+1}'(r)}{I_{n+1}(r)}-\frac{I_1'(r)}{I_1(r)}\Big)\hspace{2em} \text{for all $n\ge2$ and $r>0$};
\end{equation}
based on these two inequalities, we have, for $n\ge 2$,
\addtocounter{equation}{-1}
\renewcommand{\theequation}{\thesection.\arabic{equation}}
\begin{equation*}
    \mu_{n+1} > \mu_n.
\end{equation*}            
Moreover, the same proof from \cite{borisovich} can easily be modified to
establish 
\begin{equation*}
    \frac{1}{0^2-1}\Big(\frac{I_0'(r)}{I_0(r)}-\frac{I_1'(r)}{I_1(r)}\Big) > \frac{1}{2^2-1}\Big(\frac{I_2'(r)}{I_2(r)}-\frac{I_1'(r)}{I_1(r)}\Big),
\end{equation*}
together with \cite[(A.1)]{borisovich}, we also have
\begin{equation*}
    \mu_2 > \mu_0.
\end{equation*}
On the other hand, using $I_0'(r)=I_1(r)$ and $I_1'(r)=I_2(r) + \frac1r I_1(r)$, we  simplify $\mu_0$ as
\begin{equation*}
    \begin{split}
        \mu_0 &\,=\, \frac1{R_S}\Big[-1 + \frac{I_0(R_S)I_1(R_S)}{R_S I_0(R_S) I_2(R_S) + I_0(R_S)I_1(R_S) - R_S I_1^2(R_S)}\Big]\\
        &\,=\, \frac1{R_S}\Big[ -1 + 1\Big/\Big(R_S\frac{I_2(R_S)}{I_1(R_S)} - R_S\frac{I_1(R_S)}{I_0(R_S)} + 1 \Big) \Big].
    \end{split}
\end{equation*}
Since $I_0(R_S) I_2(R_S) < I_1^2(R_S)$, we get $\frac{I_2(R_S)}{I_1(R_S)} < \frac{I_1(R_S)}{I_0(R_S)}$, which implies $R_S\frac{I_2(R_S)}{I_1(R_S)} - R_S\frac{I_1(R_S)}{I_0(R_S)} + 1<1$. Moreover, by \cite[(A.1)]{borisovich}, $\frac{I_1'(R_S)}{I_1(R_S)}-\frac{I_0'(R_S)}{I_0(R_S)} = \frac{I_2(R_S)}{I_1(R_S)} - \frac{1}{R_S} - \frac{I_1(R_S)}{I_0(R_S)} > 0$, so that $R_S\frac{I_2(R_S)}{I_1(R_S)} - R_S\frac{I_1(R_S)}{I_0(R_S)} + 1 > 0$. Hence it is clear that $\mu_0>0$. Putting these estimates all together, we  have,
\begin{equation}
    \label{distinct}
    0<\mu_0 < \mu_2 < \mu_3 < \mu_4 < \cdots.
\end{equation}
With this monotonicity of $\mu_n$, we are now able to establish the bifurcation result for system \re{model2}.

We choose $X=X^{3+\alpha}_2$ and $Y=X^\alpha_2$ in the Crandall-Rabinowitz Theorem. The rigorous justifications of the 
Fr\'echet derivative and differentiability of $\mathcal{F}$ follow from the same
arguments as those in \cite{angio,Fengjie,FH3,Hongjing,Zejia,zhao2}. We now proceed to verify the
four assumptions of the theorem. The assumption (1) is naturally satisfied. Due to \re{distinct}, the kernel space satisfies
\begin{equation*}
    \text{Ker}[\mathcal{F}_{\tilde{R}}(0,\mu_n)] = \text{span}\{\cos(n\theta)\}\hspace{2em} \text{for even }n\ge2,
\end{equation*}
which indicates
\begin{equation*}
    \text{dim}(\text{Ker}[\mathcal{F}_{\tilde{R}}(0,\mu_n)])=1  \hspace{2em} \text{for even }n\ge2.
\end{equation*}
Moreover, since $\text{Im}[\mathcal{F}_{\tilde{R}}(0,\mu_n)]\oplus \text{span}\{\cos(n\theta)\} = Y$ is the whole space, we have $\text{codim}(\text{Im}[\mathcal{F}_{\tilde{R}}(0,\mu_n)])=1$. Finally, by differentiating \re{FreD1} with respect to $\mu$, we obtain
\begin{equation*}
    [\mathcal{F}_{\mu \tilde{R}} (0,\mu_n)] \cos(n\theta) = -\frac{I_1(R_S)}{I_0(R_S)}\Big(\frac{I_n'(R_S)}{I_n(R_S)}-\frac{I_1'(R_S)}{I_1(R_S)}\Big)\cos(n\theta) \notin \text{Im}[F_{\tilde{R}}(0,\mu_n)].
\end{equation*}
Thus all the assumptions in the Crandall-Rabinowitz Theorem are satisfied, and the following bifurcation result for system \re{model2} is established.

\begin{thm}
\label{bifur}
For each even $n\ge2$, $\mu=\mu_n(R_S)$ is a bifurcation point of the symmetry-breaking solution to the system \re{model2} with free boundary
\begin{equation}\label{form}
    r = R_s + \varepsilon \cos(n\theta) + o(\varepsilon), \hspace{2em}
    \mu = \mu(\varepsilon)= \mu_n(R_S) + o(1).
\end{equation}
\end{thm}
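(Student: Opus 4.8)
The plan is to apply the Crandall--Rabinowitz Theorem (Theorem \ref{bifurthm}) to the nonlinear operator $\mathcal{F}:X^{3+\alpha}_2\times\mathbb{R}\to X^\alpha_2$ of \re{mathcalF}, with $\mu$ as the bifurcation parameter and the radially symmetric solution $\sigma_S$ furnishing the trivial branch $\mathcal{F}(0,\mu)\equiv0$. Since nearly every ingredient the theorem demands has already been assembled in the computation of $[\mathcal{F}_{\tilde{R}}(0,\mu)]$ and in the ordering \re{distinct}, the proof reduces to checking the four hypotheses and then transcribing the conclusion into the form \re{form}.

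First I would settle the regularity of $\mathcal{F}$: that it is well defined and continuously (indeed twice) Fr\'echet differentiable between the stated Banach spaces, with first derivative exactly \re{FreD}. This rests on Schauder estimates for the elliptic problem \re{model3} and on upgrading the expansion \re{expand} from a formal to a rigorous statement, so that $\sigma_1$ solving \re{linear} is genuinely the linearization of $\sigma$ in $\tilde{R}$; rather than reproduce this, I would invoke the analogous arguments in the works cited for \re{expand}. With differentiability in hand, I turn to the four assumptions. Assumption (1), existence of the trivial branch, is immediate from $\sigma_S$. For the kernel, evaluating \re{FreD1} on the basis $\{\cos(n\theta)\}$ shows $[\mathcal{F}_{\tilde{R}}(0,\mu)]\cos(n\theta)=0$ precisely when $\mu=\mu_n(R_S)$ as in \re{bifurps}; since we work in the even subspace $X_2$, the strict ordering \re{distinct} forces $\mu_n$ to be attained by a single even index, so $\text{Ker}[\mathcal{F}_{\tilde{R}}(0,\mu_n)]=\text{span}\{\cos(n\theta)\}$ is one-dimensional for even $n\ge2$. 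The complementary image has codimension one via $\text{Im}[\mathcal{F}_{\tilde{R}}(0,\mu_n)]\oplus\text{span}\{\cos(n\theta)\}=Y$, and transversality follows by differentiating \re{FreD1} in $\mu$ and verifying $[\mathcal{F}_{\mu\tilde{R}}(0,\mu_n)]\cos(n\theta)\notin\text{Im}[\mathcal{F}_{\tilde{R}}(0,\mu_n)]$.

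Once the four hypotheses are in place, Crandall--Rabinowitz produces a smooth local branch of nontrivial zeros of $\mathcal{F}$ emanating from $(0,\mu_n)$, parametrized by a small $\varepsilon$ and tangent to the kernel direction $\cos(n\theta)$, so that $\tilde{R}=\varepsilon\cos(n\theta)+o(\varepsilon)$ and $\mu=\mu_n(R_S)+o(1)$; translating back to the boundary $r=R_S+\tilde{R}(\theta)$ gives exactly \re{form}.

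The main obstacle is not this closing invocation but the two analytic inputs beneath it. The first is the rigorous Fr\'echet differentiability of $\mathcal{F}$ in the Banach-space setting, which carries all the PDE machinery and the genuine (not merely formal) status of the expansion \re{expand}. The second is the one-dimensionality of the kernel: it hinges entirely on the strict monotonicity \re{distinct}, and hence on the Bessel-function inequalities [7,(A.1)] and [7,(A.7)] together with the explicit estimates for $\mu_0$ and $\mu_2$. The codimension count and the transversality check are comparatively short once these are secured, and the remainder is bookkeeping.
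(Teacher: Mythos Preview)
Your proposal is correct and follows essentially the same route as the paper: set $X=X^{3+\alpha}_2$, $Y=X^\alpha_2$, invoke the cited references for the rigorous Fr\'echet differentiability of $\mathcal{F}$, and then verify the four Crandall--Rabinowitz hypotheses using \re{distinct} for the one-dimensional kernel, the complementary decomposition for codimension one, and the $\mu$-derivative of \re{FreD1} for transversality. The paper's own argument is no more detailed on the analytic inputs you flag as the main obstacles; it too defers them to \cite{angio,Fengjie,FH3,Hongjing,Zejia,zhao2}.
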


\begin{rem}\label{rem1}
The bifurcation result is actually valid for all $n\ge2$ not restricting to even $n$ only, however the proof is much more complicated.
\end{rem}

\section{The numerical method based on the neural network discretization}
\subsection{Boundary integral formulation}
Using the boundary integral formulation \cite{boundary1,boundary2,nonlinear1,nonlinear2}, we apply the standard representation formula \cite{friedman2005generalized} on system \re{model2} to obtain 
\begin{equation}
    \sigma(\xx) = \int_{\p \Omega}\bigg[ G_1(\xx,\y)\frac{\p \sigma(\y)}{\p {\bm n}_y}-\sigma(\y)\frac{\p G_1(\xx,\y)}{\p {\bm n}_y}\bigg] \dif S_y \hspace{2em} \text{for } \xx\in\Omega,\label{repre}
\end{equation}
where $G_1$ is the Green function for the operator $-\Delta+1$, namely, $G_1(\xx,\y)=G_1(|\xx-\y|)=\frac{i}{4}H_0^{(1)}(i|\xx-\y|)$ for two dimensional case, and $H_0^{(1)}$ is the Hankel function of the first kind. By using the ``jump'' relationship \cite{kress1989linear} as $\xx \rightarrow \p \Omega$, we derive
\begin{equation}
    \frac{\sigma(\xx)}{2} = \int_{\p \Omega}\bigg[G_1(\xx,\y)\frac{\p \sigma(\y)}{\p {\bm n}_y}-\sigma(\y)\frac{\p G_1(\xx,\y)}{\p {\bm n}_y}\bigg] \dif S_y \hspace{2em} \text{for } \xx\in\p \Omega.\label{jump}
\end{equation}
Combining with the boundary conditions in \re{model2}, we further obtain
\begin{equation}
    \frac{\mu + \kappa(\xx)}{2} = \int_{\p \Omega}\bigg[\beta G_1(\xx,\y)- (\mu + \kappa(\y))\frac{\p G_1(\xx,\y)}{\p {\bm n}_y}\bigg] \dif S_y \hspace{2em} \text{for } \xx\in\p \Omega.\label{repre1}
\end{equation}

Although $G_1(r)=O(|\ln r|)$ is weakly singular, $G'_r(r)=O(r^{-1})$  is strongly singular. To regularize the singularity of $G'_1(r)$, we introduce a new function $Q(r)$ defined as
\begin{equation}\label{Q}
    Q(r) = \frac1r \Big(G_1'(r) + \frac{1}{2\pi r}\Big).
\end{equation}
Then $Q(r)=O(|\ln r|), Q'(r)=O(r^{-1}), Q''(r)= O(r^{-2})$ (see \cite{hao2018convergence}).
As in \cite{hao2018convergence}, if we replace $G_1$ in \re{jump} by the fundamental solution for $-\Delta$ (which equals to $-\frac{1}{2\pi}\ln|\xx-\y|)$, and take $\sigma = 1$, we have
\begin{equation}\label{eq}
    \frac12 = -\int_{\p \Omega} \frac{\p}{\p {\bm n}_y}\Big(-\frac{1}{2\pi}\ln|\xx-\y|\Big)\dif S_y = \frac1{2\pi} \int_{\p \Omega}  \frac{\y-\xx}{|\xx-\y|^2}\cdot {\bm n}_y \dif S_y\hspace{2em}
\text{ for } x\in \p\Omega,\end{equation}
hence
\begin{equation}
    \label{eq2}
    \frac{\mu+\kappa(\xx)}{2} = \frac{1}{2\pi}\int_{\p \Omega} (\mu+\kappa(\xx)) \frac{\y-\xx}{|\xx-\y|^2}\cdot {\bm n}_y \dif S_y
    \hspace{2em}
\text{ for } x\in \p\Omega.
\end{equation}
Using \re{Q}, \re{eq}, as well as \re{eq2}, we can rewrite \re{repre1} as
\begin{equation}
    \label{repre2}
    \int_{\p \Omega}\bigg[\beta G_1(\xx,\y) - \Big((\mu + \kappa(\y))Q(|\xx-\y|) - \frac{\kappa(\y)-\kappa(\xx)}{2\pi|\xx-\y|^2}\Big)(\y-\xx)\cdot {\bm n}_y\bigg]\dif S_y = 0 \hspace{2em} \text{for } \xx\in\p\Omega,
\end{equation}
with $\p \Omega$ being the only unknown in the equation.

Equation \re{repre2} determines the free boundary $\p\Omega$. Due to the highly non-linear nature of \re{repre2}, there might be multiple solutions of $\p\Omega$. These solutions are expected to be computed by the machine learning techniques.

\subsection{The neural network discretization}
We use $\p \Omega: r=R(\theta),\;\theta \in (-\infty,\infty) $ to represent the unknown boundary. Clearly, it satisfies the $2\pi$-periodic boundary condition, namely, 
\begin{equation}
    \label{periodic}
    R(\theta) = R(\theta+2\pi).
\end{equation}
For simplicity, we shall restrict $\theta$ to $[0,2\pi]$.
By denoting in the polar coordinates, $$\xx=(R(\hat{\theta})\cos(\hat{\theta}),R(\hat{\theta})\sin(\hat{\theta}))\quad\text{ and } \quad\y=(R(\theta)\cos(\theta),R(\theta)\sin(\theta)),$$
we have 
\begin{eqnarray*}
 |\xx - \y| &=& \sqrt{[R(\hat{\theta})\cos(\hat{\theta})-R(\theta)\cos(\theta)]^2 + [R(\hat{\theta})\sin(\hat{\theta})-R(\theta)\sin(\theta)]^2}\\ 
 &=&  \sqrt{R^2(\hat{\theta}) + R^2(\theta) - 2R(\hat{\theta})R(\theta)\cos(\hat{\theta}-\theta)}\\
 &\triangleq& D[R].
\end{eqnarray*}
In addition,
\begin{eqnarray*}
        {\bm n}_y &=& \frac{1}{\sqrt{[R'(\theta)]^2 +  R^2(\theta)}} \Big(R'(\theta)\sin(\theta)+R(\theta)\cos(\theta),-R'(\theta)\cos(\theta)+R(\theta)\sin(\theta)\Big),\\\hbox{and~}\,
        \dif S_y &=& \sqrt{[R'(\theta)]^2 +  R^2(\theta)}\dif \theta.
\end{eqnarray*}
Using the mean-curvature formula in the 2-dimensional case for $r=R(\theta)$, we also have
\begin{equation}
\label{curvature}
    \kappa_R = \frac{R^2 + 2(R')^2 - R R''}{[R^2 + (R')^2]^\frac32}.
\end{equation}
For notational convenience, we denote $\kappa_R(\hat{\theta}) = \kappa_R(\xx)$ and $\kappa_R(\theta) = \kappa_R(\y)$. Both $\kappa_R(\hat{\theta})$ and $\kappa_R(\theta)$ can be computed by \re{curvature}.

Based on the above calculations, we rewrite the left-hand side of \re{repre2} as a functional of $R(\theta)$ as follows
\begin{equation}\label{L}
    \begin{split}
        \mathcal{L}[R] (\hat{\theta})&\triangleq  \int_{0}^{2\pi} \bigg[\beta G_1(D[R]) \sqrt{[R'(\theta)]^2 +  R^2(\theta)} - \Big((\mu+\kappa_R(\theta))Q(D[R]) - \frac{\kappa_R(\theta)-\kappa_R(\hat{\theta})}{2\pi (D[R])^2}\Big)\\
   &\hspace{11.5em} \Big(R^2(\theta)+R(\hat{\theta})R'(\theta)\sin(\hat{\theta}-\theta) - R(\hat{\theta})R(\theta)\cos(\hat{\theta}-\theta)\Big)\bigg]\dif \theta,
    \end{split}
\end{equation}
for $\hat{\theta}\in [0,2\pi]$. 
Equation \re{repre2} implies that $\mathcal{L}[R](\hat{\theta})\equiv 0$. To regularize the singularity of the kernel, we 
introduce a small constant $\tau>0$ in $D[R]$, namely, \begin{equation}\label{rf1}
    D_\tau[R] =  \sqrt{R^2(\hat{\theta}) + R^2(\theta) - 2R(\hat{\theta})R(\theta)\cos(\hat{\theta}-\theta) + \tau^2},
\end{equation}
and define the corresponding functional for each $\hat{\theta}\in [0,2\pi]$,
\begin{equation}
    \label{Lpolar}
    \begin{split}
    \mathcal{L}_\tau[R] (\hat{\theta})&\triangleq  \int_{0}^{2\pi} \bigg[\beta G_1(D_\tau[R]) \sqrt{[R'(\theta)]^2 +  R^2(\theta)} - \Big((\mu+\kappa_R(\theta))Q(D_\tau[R]) - \frac{\kappa_R(\theta)-\kappa_R(\hat{\theta})}{2\pi (D_\tau[R])^2}\Big)\\
   &\hspace{12em} \Big(R^2(\theta)+R(\hat{\theta})R'(\theta)\sin(\hat{\theta}-\theta) - R(\hat{\theta})R(\theta)\cos(\hat{\theta}-\theta)\Big)\bigg]\dif \theta,
    \end{split}
\end{equation}
Hence we recover \re{L} when $\tau\to0$ and have a non-singular kernel when $\tau>0$. In section 3, we shall prove that \re{Lpolar} is a good approximation of \re{L}.

Based on machine learning techniques, we write an approximation of the unknown free boundary function $R(\theta)$ by a single hidden layer neutral network:
\begin{equation}
R(\theta)\approx \sum_{i=1}^N a_i \Psi(b_i \theta + c_i) + d \triangleq \rho(\theta;\mathcal{X}),
\end{equation}
%
where $N$ is the width, $\mathcal{X} = (a_1,\cdots,a_N,b_1,\cdots,b_N,c_1,\cdots,c_N, d)\in \mathbb{R}^{3N+1}$ is the set of all the neural network's parameters, and $\Psi$ is a nonlinear ``activation'' function such as sigmoid function. Note that for each $\rho$, the operator $\mathcal{L}_\tau [\rho](\hat{\theta}_i)$ can be calculated analytically, where $\hat{\theta}_i$ are $m$ randomly sampled points, which are i.i.d. in $[0,2\pi]$. We consider the loss function:
\begin{equation}\label{loss}
    F(\mathcal{X}, \bm{\hat{\theta}}) \triangleq \frac1m \sum_{i=1}^m \Big(\mathcal{L}_\tau [\rho](\hat{\theta}_i)\Big)^2 + \sum_{\alpha=0}^2\Big(D^\alpha\big(\rho(0;\mathcal{X}) - \rho(2\pi;\mathcal{X})\big)\Big)^2.
\end{equation}
The reason why we choose up to second-order derivative in the second term is that the curvature term involves at most second-order derivative, hence we shall guarantee the continuity up to second-order. The loss function \re{loss} measures how well the function $\rho(\theta;\mathcal{X})$ satisfies equation \re{repre2} as well as the $2\pi$-periodic boundary condition \re{periodic}. Hence $\mathcal{X}$ is obtained via solving the following optimization problem
\begin{equation}
    \label{costf}
    \begin{split}
\min_{\mathcal{X}}~ {J}(\mathcal{X}) \triangleq \mathbb{E}_{ \bm{\hat{\theta}}}\big[F(\mathcal{X}, \bm{\hat{\theta}})\big] &= \mathbb{E}_{\hat{\theta}_i}\Big[\big(\mathcal{L}_\tau[\rho](\hat{\theta}_i)\big)^2\Big] \,+ \, \sum_{\alpha=0}^2\Big(D^\alpha \rho(0;\mathcal{X}) - D^\alpha \rho(2\pi;\mathcal{X}) \Big)^2 \\
&=  \int_0^{2\pi}\big(\mathcal{L}_\tau [\rho](\hat{\theta}_i)\big)^2  \nu( \hat{\theta}_i)\, \dif \hat{\theta}_i \,+ \, \sum_{\alpha=0}^2\Big(D^\alpha \rho(0;\mathcal{X}) - D^\alpha \rho(2\pi;\mathcal{X}) \Big)^2,
\end{split}
\end{equation}
where $\nu(\hat{\theta}_i)$ is a probability density of $\hat{\theta}_i \in[0,2\pi]$.



\subsection{Stochastic gradient descent training algorithm}
In order to solve \re{costf} numerically, we use the stochastic gradient descent method  shown in Algorithm \ref{alg}.
\begin{algorithm}[H]
\caption{}
\label{alg}
\begin{algorithmic}
\STATE {Choose an initial guess $\mathcal{X}_1$}
\FOR{$k=1,2,\cdots$}
\STATE Generate $m$ random points ${\bm \hat{\bm \theta}}_k = (\hat{\theta}_{k,i})_{i=1}^m$ from $[0,2\pi]$;\ 
\STATE Calculate the loss function at randomly sampled points $F(\mathcal{X}_k, {\bm \hat{\bm \theta}}_k)$;\
\STATE Compute a stochastic vector $G(\mathcal{X}_k, {\bm \hat{\bm \theta}}_k) = \nabla_\mathcal{X} F(\mathcal{X}_k,{\bm \hat{\bm \theta}}_k)$;\
\STATE Set the new iterate as $\mathcal{X}_{k+1} = \mathcal{X}_k - \alpha_n G(\mathcal{X}_k,{\bm \hat{\bm \theta}}_k)$;
\ENDFOR
\end{algorithmic}
\end{algorithm}

Due to the non-convexity of $J(\mathcal{X})$,
 $\mathcal{X}_k$ may  stuck at a local minimum (not a global minimum). Nevertheless, stochastic gradient descent has proven very effective in training deep learning models to obtain the global minimum.

\section{Convergence of the neural network discretization}
In this section, we shall prove that the numerical solution with the  neural network discretization converges to the unknown boundary of system \re{model2} as the number of hidden units tends to infinity, namely, 
$$\text{there exists $\rho\in \mathcal{C}^N$ such that $J(\rho(\theta;\mathcal{X}))\rightarrow 0$ as $n\rightarrow \infty$};$$
where
\begin{equation}\label{set}
    \mathcal{C}^N: \; \{\rho^N(\theta): [0,2\pi]\rightarrow \mathbb{R} \, | \, \rho^N(\theta) = \sum_{i=1}^N a_i \Psi(b_i \theta + c_i) + d\}.
\end{equation}
The precise statement is included in Theorem \ref{main1}.

\subsection{Preliminary estimates}
Denote 
\begin{eqnarray*}
    h(\xx) &=& \int_{\p \Omega} \beta G_1(D[R]) \dif S_y,\\
    g(\xx) &=& \int_{\p \Omega} (\mu + \kappa_R(\y)) Q(D[R])(\y - \xx)\cdot {\bm n}_y \dif S_y,\\
    w(\xx) &=& \int_{\p \Omega} \frac{\kappa_R(\y) - \kappa_R(\xx)}{2\pi} \frac{\y - \xx}{(D[R])^2}\cdot {\bm n}_y \dif S_y.
\end{eqnarray*}
Then the operator $\mathcal{L}$ in \re{L} can be separated into three parts, namely,
\begin{equation*}
    \mathcal{L}[R](\xx) =  h(\xx) - g(\xx) + w(\xx).
\end{equation*}
As mentioned before, since $G_1(\xx,\y) = G_1(|\xx-\y|) = \frac{i}{4}H_0^{(1)}(i|\xx-\y|)$ is singular at $|\xx-\y|=0$, we further introduce,
\begin{eqnarray*}
    h_\tau(\xx) &=& \int_{\p \Omega} \beta G_1(D_\tau[R]) \dif S_y,\\
    g_\tau(\xx) &=& \int_{\p \Omega} (\mu + \kappa_R(\y)) Q(D_\tau[R])(\y - \xx)\cdot {\bm n}_y \dif S_y,\\
    w_\tau(\xx) &=& \int_{\p \Omega} \frac{\kappa_R(\y) - \kappa_R(\xx)}{2\pi} \frac{\y - \xx}{(D_\tau[R])^2}\cdot {\bm n}_y \dif S_y.
\end{eqnarray*}
Correspondingly, $\mathcal{L}_\tau$ in \re{Lpolar} is also separated into three pieces:
\begin{equation*}
    \mathcal{L}_\tau[R](\xx) =  h_\tau(\xx) - g_\tau(\xx) + w_\tau(\xx). 
\end{equation*}

By \textit{Lemmas 7.6 -- 7.8} in \cite{hao2018convergence}, we 
have the following results:
\begin{lem}\label{approx}
Suppose that $\p \Omega: r=R(\theta) \in C^{3}(-\infty,\infty)$
and $2\pi$-periodic, then
\begin{gather}
    \|h-h_\tau\|_{L^\infty} \le C \tau |\ln \tau|,\label{tau1}\\
    \|g-g_\tau\|_{L^\infty} \le C \|\kappa_R\|_{L^\infty}\tau  \le C\tau,\label{tau2}\\
    \|w-w_\tau\|_{L^\infty}  \le C \|\kappa_R\|_{C^1}\tau  \le C \tau,\label{tau3},
\end{gather}
where the constant $C$ is independent of $\tau$. 
\end{lem}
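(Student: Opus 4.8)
The plan is to handle the three estimates \re{tau1}--\re{tau3} by a single device: bound the change in each kernel caused by replacing $D[R]$ with $D_\tau[R]$, and then split the $\theta$-integral over a near zone $|\theta-\hat\theta|\le\tau$ and a far zone $|\theta-\hat\theta|>\tau$. The only algebra I need are the identities $D_\tau^2-D^2=\tau^2$ and $D_\tau-D=\tau^2/(D_\tau+D)$, which yield the two bounds used throughout, namely $0\le D_\tau-D\le\tau$ everywhere, and $0\le D_\tau-D\le\tau^2/(2D)$ together with $D_\tau\ge\max\{D,\tau\}$. Because $r=R(\theta)$ is a positive, embedded, $2\pi$-periodic $C^3$ curve, one also has the uniform comparability $D[R]\asymp|\theta-\hat\theta|$ (distance on the circle), with constants depending only on $\|R\|_{C^1}$ and a positive lower bound for $R$; this is what makes the near/far split quantitative and uniform in $\hat\theta$, so that each pointwise bound becomes an $L^\infty$ bound by taking the supremum over $\hat\theta\in[0,2\pi]$.

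For \re{tau1} I write $h-h_\tau=\beta\int_0^{2\pi}[G_1(D)-G_1(D_\tau)]\sqrt{(R')^2+R^2}\,\dif\theta$. In the far zone the mean value theorem and $G_1'(r)=O(r^{-1})$ give $|G_1(D)-G_1(D_\tau)|\le C D^{-1}(D_\tau-D)\le C\tau^2 D^{-2}$, and $\int_{|\theta-\hat\theta|>\tau}\tau^2 D^{-2}\,\dif\theta\le C\int_\tau^\pi \tau^2 s^{-2}\,\dif s\le C\tau$. In the near zone I do not subtract but bound the two terms separately using $G_1(r)=O(|\ln r|)$: since $\int_0^\tau|\ln s|\,\dif s=\tau(1-\ln\tau)\sim\tau|\ln\tau|$ and $|G_1(D_\tau)|\le C|\ln\tau|$ on a set of measure $O(\tau)$, the near zone contributes $O(\tau|\ln\tau|)$. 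Adding the two zones gives \re{tau1}.

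The estimates \re{tau2} and \re{tau3} are where the geometry helps. A Taylor expansion of the factor $\Phi(\theta):=R^2(\theta)+R(\hat\theta)R'(\theta)\sin(\hat\theta-\theta)-R(\hat\theta)R(\theta)\cos(\hat\theta-\theta)$, which equals $(\y-\xx)\cdot{\bm n}_y\,\dif S_y/\dif\theta$, shows that both its value and its first $\theta$-derivative vanish at $\theta=\hat\theta$, so $\Phi=O((\theta-\hat\theta)^2)=O(D^2)$. For \re{tau2} only the argument of $Q$ changes, so $g-g_\tau=\int(\mu+\kappa_R)[Q(D)-Q(D_\tau)]\Phi\,\dif\theta$ with $\Phi=O(D^2)$; using $Q'(r)=O(r^{-1})$ in the far zone makes $|Q(D)-Q(D_\tau)|\,|\Phi|\le C\tau^2$, while the near zone is controlled directly by $Q(r)=O(|\ln r|)$ against the $O(D^2)$ factor, both integrating to $O(\tau)$ with constant governed by $\mu$ and $\|\kappa_R\|_{L^\infty}$, which is \re{tau2}. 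For \re{tau3} the kernel difference is exactly $1/D^2-1/D_\tau^2=\tau^2/(D^2D_\tau^2)$, and the curvature difference supplies an extra factor $|\kappa_R(\y)-\kappa_R(\xx)|\le\|\kappa_R\|_{C^1}\,O(D)$ (this is where $R\in C^3$, i.e. $\kappa_R\in C^1$, is used); combined with $\Phi=O(D^2)$ the integrand is $O(\|\kappa_R\|_{C^1}\,\tau^2 D/D_\tau^2)$, which integrates to $O(\|\kappa_R\|_{C^1}\tau)$ over both zones, giving \re{tau3}.

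The main obstacle is the term $h$: unlike $g$ and $w$, its integrand carries no geometric factor vanishing on the diagonal, so the logarithmic singularity of $G_1$ survives and forces the genuine $\tau|\ln\tau|$ loss; pinning down the exact power of the logarithm requires the careful near/far bookkeeping above rather than a single mean-value estimate. The other delicate point, common to all three, is establishing the uniform comparability $D[R]\asymp|\theta-\hat\theta|$ and the $O(D^2)$ cancellation of the normal factor uniformly in $\hat\theta$; these rest on $R$ being a positive, embedded $C^3$ curve and are precisely the regularity hypotheses under which \emph{Lemmas 7.6--7.8} of \cite{hao2018convergence} are established, so the present lemma follows by transcribing those arguments to the kernels $G_1$ and $Q$ at hand.
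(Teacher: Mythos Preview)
Your proposal is correct and aligns with the paper's approach: both ultimately rest on Lemmas~7.6--7.8 of \cite{hao2018convergence}, and the paper's own proof is in fact nothing more than an identification of $h,h_\tau,g,g_\tau,w,w_\tau$ with the corresponding objects in that reference (their $h,h_\varepsilon,g_2,g_{2\varepsilon},-g_{11},-g_{11\varepsilon}$, parametrized by arclength), followed by a direct citation. Your write-up goes further by actually sketching the near/far zone argument that those lemmas encapsulate---the identity $D_\tau-D=\tau^2/(D_\tau+D)$, the comparability $D\asymp|\theta-\hat\theta|$, and the $O(D^2)$ vanishing of the normal factor $\Phi$---so you have supplied the content that the paper merely cites, but the route is the same.
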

\begin{proof}

The proof is really lengthy, here we only point out some key steps. Our definitions of $h(\xx)$ and $h_\tau(\xx)$ are equivalent to $h(\hat{s})$ and $h_\varepsilon(\hat{s})$ when $f(s)=\beta$ \cite[(93)]{hao2018convergence} (Note that functions in \cite{hao2018convergence} are defined based on curve length $s$. Although they look different from our definitions, they are actually equivalent to our definitions of functions. The curve length parameters $s$ and $\hat{s}$ correspond to the parameters $\y$ and $\xx$ here.) Hence \re{tau1} directly follows from \textit{Lemma 7.6}. Similarly, $g(\xx)$ and $g_\tau(\xx)$ are equivalent to $g_2(\hat{s})$ and $g_{2\varepsilon}(\hat{s})$ when $f(s)=\mu+\kappa_R(\y)$ \cite[pg.\,146]{hao2018convergence}; $w(\xx)$ and $w_\tau(\xx)$ are equivalent to $-g_{11}(\hat{s})$ and $-g_{11\varepsilon}(\hat{s})$ when $f(s) = \kappa_R(\y)$ and $f(\hat{s})=\kappa_R(\xx)$ \cite[(110), (111)]{hao2018convergence}. By \textit{Lemma 7.7 and 7.8}, we shall get estimates \re{tau2} and \re{tau3}. Notice that we need $\|\kappa_R\|_{C^1}$ in \re{tau3}, and $\kappa_R$ involves at most second-order derivatives of $R(\theta)$, hence we require $\p \Omega: r=R(\theta) \in C^{3}$.
\end{proof}

\addtocounter{equation}{-1}
\renewcommand{\theequation}{\thesection.\arabic{equation}}

Based on Lemma \ref{approx}, we shall have
\begin{equation*}
    \|(\mathcal{L}_\tau-\mathcal{L})[R]\|_{L^\infty} \le  \|h-h_\tau\|_{L^\infty} + \|g-g_\tau\|_{L^\infty} + \|w - w_\tau\|_{L^\infty} \le C\tau |\ln \tau| + C\tau,
\end{equation*}
which indicates that \re{Lpolar} is a good approximation of \re{L}. Recall that it follows from \re{repre2} that $\mathcal{L}[R]\equiv 0$, then we immediately derive
\begin{equation}
    \label{tau}
    \|\mathcal{L}_\tau[R]\|_{L^\infty} \le C\tau |\ln \tau| + C\tau.
\end{equation}

\subsection{The neural network approximation}
By \textit{Theorem 3} of \cite{hornik1991approximation} we know that if the activation function $\Psi\in C^3(\mathbb{R})$ is nonconstant and bounded, then the space $\cup_{n=1}^\infty \mathcal{C}^n$  is uniformly 3-dense on compacta in $C^3(\mathbb{R})$. This means that for $R(\theta) \in C^3(-\infty,\infty)$ and every $0<\delta<1$, there is $\rho(\theta;\mathcal{X})\in \cup_{N=1}^\infty \mathcal{C}^N$ such that
\begin{equation}
    \label{eps}
    \|\rho-R\|_{3,[0,2\pi]} \le \delta,
\end{equation}
where $\|f\|_{3,[0,2\pi]} := \max_{\alpha\le 3} \sup_{x\in [0,2\pi]} |D^\alpha f(x)|$. Clearly, \re{eps2} implies
\begin{equation}
    \label{eps2}
    \|\rho-R\|_{L^\infty([0,2\pi])}, \|\rho' - R'\|_{L^\infty([0,2\pi])}, \|\rho''- R''\|_{L^\infty([0,2\pi])}, \|\rho'''-R'''\|_{L^\infty([0,2\pi])} \le \delta.
\end{equation}
Based on \re{eps2}, let's first bound $\|\mathcal{L}_\tau [\rho]-\mathcal{L}_\tau[R]\|_{L^\infty}$, which is a key estimate in proving the convergence theorem. Throughout the rest of this paper, $C$ is used to represent a generic constant independent of $\tau$ and $\delta$, which might change from a line to next.

Recall the formulas for $\mathcal{L}_\tau[R]$ and $D_\tau[R]$ in \re{Lpolar} and \re{rf1}, respectively:
\begin{equation*}
    \begin{split}
   \mathcal{L}_\tau[R] (\hat{\theta})&=  \int_{0}^{2\pi} \bigg[\beta G_1(D_\tau[R]) \sqrt{[R'(\theta)]^2 +  R^2(\theta)} - \Big((\mu+\kappa_R(\theta))Q(D_\tau[R]) - \frac{\kappa_R(\theta)-\kappa_R(\hat{\theta})}{2\pi (D_\tau[R])^2}\Big)\\
   &\hspace{12em} \Big(R^2(\theta)+R(\hat{\theta})R'(\theta)\sin(\hat{\theta}-\theta) - R(\hat{\theta})R(\theta)\cos(\hat{\theta}-\theta)\Big)\bigg]\dif \theta,\\
   D_\tau[R] &=  \sqrt{R^2(\hat{\theta}) + R^2(\theta) - 2R(\hat{\theta})R(\theta)\cos(\hat{\theta}-\theta) + \tau^2}.
    \end{split}
\end{equation*}
Correspondingly, $\mathcal{L}_\tau[\rho]$ takes the following form
\begin{equation}\label{Ltauf}
    \begin{split}
     \mathcal{L}_\tau[\rho] (\hat{\theta})&=  \int_{0}^{2\pi} \bigg[\beta G_1(D_\tau[\rho]) \sqrt{[\rho'(\theta)]^2 +  \rho^2(\theta)} - \Big((\mu+\kappa_\rho(\theta))Q(D_\tau[\rho]) - \frac{\kappa_\rho(\theta)-\kappa_\rho(\hat{\theta})}{2\pi (D_\tau[\rho])^2}\Big)\\
   &\hspace{12em} \Big(\rho^2(\theta)+\rho(\hat{\theta})\rho'(\theta)\sin(\hat{\theta}-\theta) - \rho(\hat{\theta})\rho(\theta)\cos(\hat{\theta}-\theta)\Big)\bigg]\dif \theta,
    \end{split}
\end{equation}
where
\begin{equation}\label{Dtauf}
    D_\tau[\rho] =  \sqrt{\rho^2(\hat{\theta}) + \rho^2(\theta) - 2\rho(\hat{\theta})\rho(\theta)\cos(\hat{\theta}-\theta) + \tau^2}.
\end{equation}
Notice that we use $\kappa_R$ and $\kappa_\rho$ to differentiate the curvature on different curves. By \re{curvature},
\begin{equation}
    \label{kappa12}
    \kappa_R =  \frac{R^2 + 2(R')^2 - R R''}{[R^2 + (R')^2]^\frac32}, \hspace{2em}\text{and  \;} \kappa_\rho = \frac{\rho^2 + 2(\rho')^2 - \rho \rho''}{[\rho^2 + (\rho')^2]^\frac32}.
\end{equation}
Subtracting $\mathcal{L}_\tau[\rho]$ from $\mathcal{L}_\tau[R]$, we derive, for each $\hat{\theta}\in [0,2\pi]$,
\begin{equation*}
    \Big|\mathcal{L}_\tau [\rho](\hat{\theta})-\mathcal{L}_\tau[R](\hat{\theta})\Big| \le \beta \int_0^{2\pi} \Big|\text{\Rmnum{1}}\Big| \dif \theta + \int_0^{2\pi} \Big|\text{\Rmnum{2}}\Big| \dif \theta + \int_0^{2\pi} \Big|\text{\Rmnum{3}}\Big| \dif \theta,
\end{equation*}
where
\begin{equation*}
    \begin{split}
        \text{\Rmnum{1}} &= G_1(D_\tau[\rho])\sqrt{[\rho'(\theta)]^2 +  \rho^2(\theta)} - G_1(D_\tau[R])\sqrt{[R'(\theta)]^2 +  R^2(\theta)},\\
        \text{\Rmnum{2}} &= Q(D_\tau[\rho])\big(\mu + \kappa_\rho(\theta)\big)\big(\rho^2(\theta)
   +\rho(\hat{\theta})\rho'(\theta)\sin(\hat{\theta}-\theta) - \rho(\hat{\theta})\rho(\theta)\cos(\hat{\theta}-\theta)\big)\\
   &\hspace{2em}- Q(D_\tau[R])\big(\mu+\kappa_R(\theta)\big)\big(R^2(\theta)
   +R(\hat{\theta})R'(\theta)\sin(\hat{\theta}-\theta) - R(\hat{\theta})R(\theta)\cos(\hat{\theta}-\theta)\big),\\
   \text{\Rmnum{3}} &=\frac{\kappa_\rho(\theta)- \kappa_\rho(\hat{\theta})}{2\pi (D_\tau[\rho])^2}\big(\rho^2(\theta)
   +\rho(\hat{\theta})\rho'(\theta)\sin(\hat{\theta}-\theta) - \rho(\hat{\theta})\rho(\theta)\cos(\hat{\theta}-\theta)\big)\\
   &\hspace{2em}-\frac{\kappa_R(\theta)-\kappa_R(\hat{\theta})}{2\pi (D_\tau[R])^2} \big(R^2(\theta)
   +R(\hat{\theta})R'(\theta)\sin(\hat{\theta}-\theta) - R(\hat{\theta})R(\theta)\cos(\hat{\theta}-\theta)\big).
    \end{split}
\end{equation*}

In order to estimate $\big|\mathcal{L}_\tau [\rho](\hat{\theta})-\mathcal{L}_\tau[R](\hat{\theta})\big|$, we need to estimate $|\text{\Rmnum{1}}|$,  $|\text{\Rmnum{2}}|$, and $|\text{\Rmnum{3}}|$, respectively.  For term \Rmnum{1}, we insert a term $G_1(D_\tau[\rho])\sqrt{[R'(\theta)]^2 +  R^2(\theta)}$ and subtract the same term; after rearranging the terms in \Rmnum{1}, we obtain
\begin{equation}\label{term1}
    \begin{split}
        \Big|\text{\Rmnum{1}}\Big| \le&\; \Big|G_1(D_\tau[\rho])\Big| \Big|\sqrt{[\rho'(\theta)]^2 +  \rho^2(\theta)} - \sqrt{[R'(\theta)]^2 +  R^2(\theta)} \Big|\\
        &\hspace{3em}+ \Big|G_1(D_\tau[\rho]) - G_1(D_\tau[R])\Big|\sqrt{[R'(\theta)]^2 +  R^2(\theta)}.
    \end{split}
\end{equation}
Using the inequality $|\sqrt{x}-\sqrt{y}| \le \sqrt{|x-y|}$, and combining with estimates \re{eps2}, we have
\begin{equation}\label{1}
\begin{split}
    \Big|\sqrt{[\rho'(\theta)]^2 +  \rho^2(\theta)} - \sqrt{[R'(\theta)]^2 +  R^2(\theta)} \Big| \;&\le\; \sqrt{\Big|[\rho'(\theta)]^2 +  \rho^2(\theta)-[R'(\theta)]^2 -  R^2(\theta) \Big|}\\
    &\le\; \sqrt{\Big|[\rho'(\theta)]^2 - [R'(\theta)]^2\Big| + \Big|\rho^2(\theta) - R^2(\theta)\Big|} \\
    &\le\; C\delta^{\frac12}.
    \end{split}
\end{equation}
In a similar manner, $|D_\tau[\rho] - D_\tau[R]|$ can also be estimated:
\begin{equation}\label{2}
    \begin{split}
        \Big|D_\tau[\rho] - D_\tau[R]\Big| &\le \; \sqrt{\Big|\rho^2(\hat{\theta}) + \rho^2(\theta) -R^2(\hat{\theta}) - R^2(\theta) - 2\big(\rho(\hat{\theta})\rho(\theta)-R(\hat{\theta})R(\theta)\big)\cos(\hat{\theta}-\theta)\Big|}\\
        &\le\; \sqrt{\Big|\rho^2(\hat{\theta}) - R^2(\hat{\theta})\Big| + \Big|\rho^2(\theta) - R^2(\theta)\Big| + 2\Big|\rho(\hat{\theta})\rho(\theta)-R(\hat{\theta})R(\theta)\Big|}\\
        &\le\; C\delta^{\frac12}.
    \end{split}
\end{equation}
In addition, since $R\in C^2$, we clearly have
\begin{equation}
    \label{3} \sqrt{[R'(\theta)]^2 +  R^2(\theta)} \le C.
\end{equation}
Next we proceed to consider the terms involving $G_1$. We compute 
$$G_1(r) =\frac i4 H_0^{(1)}(ir),\hspace{2em}\text{and }\hspace{2em} G_1'(r)=-\frac14(H_0^{(1)})'(ir)=\frac14 H_1^{(1)}(ir);$$
and it is clear that both $G_1(r)$ and $G_1'(r)$ are singular at $r=0$. Here we collect some formulas for the Hankel function, and we focus on the approximation when $r\rightarrow 0$:
\begin{equation*}
    \begin{split}
        &H_0^{(1)}(z) = J_0(z) + i Y_0(z),\hspace{2em} H_1^{(1)}(z) = J_1(z) + i Y_1(z),\hspace{2em} \text{([9.1.3] of \cite{abramowitz1948handbook})},\\
        &J_0(z) = \phi_1(-z^2), \hspace{2em} \phi_1(0)=1,\hspace{2em} \phi_1\in C^\infty, \hspace{2em} \text{([9.1.12] of \cite{abramowitz1948handbook})},\\
        &Y_0(z) = \frac2\pi \Big[\ln\Big(\frac12 z\Big)+\gamma\Big]J_0(z) + \phi_2(z^2),\hspace{2em} \phi_2\in C^\infty, \hspace{2em}\text{([9.1.13] of \cite{abramowitz1948handbook})},\\
        &J_1(z) = \frac12 z \phi_3(-z^2),\hspace{2em} \phi_3(0)=1,\hspace{2em} \phi_3\in C^\infty, \hspace{2em} \text{([9.1.10] of \cite{abramowitz1948handbook})},\\
        &Y_1(z) = -\frac{2}{\pi z} + \frac{2}{\pi}\ln\Big(\frac12 z\Big) J_1(z)-\frac{z}{2\pi}\phi_4(-z^2),\hspace{2em} \phi_4\in C^\infty, \hspace{2em} \text{([9.1.11] of \cite{abramowitz1948handbook})}.
    \end{split}
\end{equation*}
It follows that
\begin{gather*}
     G_1(r) =\frac i4 H_0^{(1)}(ir) = -\frac{1}{2\pi}\ln r\cdot \phi_1(r^2) + \phi_5(r^2),\hspace{2em} \phi_5\in C^\infty;\\
     G_1'(r) = \frac14 H_1^{(1)}(ir) = -\frac{1}{2\pi r}-\frac{1}{4\pi}r\ln r\cdot \phi_3(r^2) + r\phi_6(r^2),\hspace{2em} \phi_6\in C^\infty.
\end{gather*}
Since $\tau > 0$, both $D_\tau[\rho]$ and $D_\tau[R]$ are greater than $\tau$; hence there exists a constant $C$ which is independent of $\tau$ and $\delta$ such that
\begin{gather*}
        \Big|G_1(D_\tau[\rho])\Big| \le C|\ln \tau|,\\
        \Big|G_1(D_\tau[\rho]) - G_1(D_\tau[R])\Big| \le \frac{C}{\tau} \Big|D_\tau[\rho] - D_\tau[R]\Big|.
\end{gather*}
Substituting the above two inequalities, and estimates \re{1}, \re{2} as well as \re{3} all into \re{term1}, we finally derive
\begin{equation}
    \label{one}
    |\text{\Rmnum{1}}| \le C|\ln \tau|\delta^{\frac12} + \frac{C}{\tau} \delta^{\frac12},
\end{equation}
where the constant $C$ is independent of $\tau$ and $\delta$.

After we show the bound for \Rmnum{1}, we can estimate \Rmnum{2} and \Rmnum{3} in the same manner. Recall that
\begin{equation*}
    Q(r) = \frac1r\Big(G_1'(r) + \frac{1}{2\pi r}\Big),
\end{equation*}
then
\begin{gather*}
    Q(r) = -\frac{1}{4\pi}\ln r\cdot \phi_3(r^2) + \phi_6(r^2),\hspace{2em} Q'(r)=O(r^{-1}).
\end{gather*}
Therefore, similar as function $G_1$, there exists a constant $C$ not independent of $\tau$ and $\delta$ such that
\begin{gather*}
        \Big|Q(D_\tau[\rho])\Big| \le C|\ln \tau|,\\
        \Big|Q(D_\tau[\rho]) - Q(D_\tau[R])\Big| \le \frac{C}{\tau} \Big|D_\tau[\rho] - D_\tau[R]\Big|.
\end{gather*}
Next we turn our attention to the terms involving the curvature. Since $R,\rho\in C^3$, $\kappa_R$ and $\kappa_\rho$ are both bounded based on \re{kappa12} (note that we are only interested in the solutions which are away from the origin, hence the denominator of the curvature is not close to 0). Moreover, subtracting $\kappa_\rho$ from $\kappa_R$, recalling also \re{1} and \re{eps2}, we have 
\begin{equation*}
\begin{split}
    \Big|\kappa_\rho - \kappa_R\Big| \;&\le\; \Big|\frac{\rho^2 + 2(\rho')^2 - \rho \rho''}{[\rho^2 + (\rho')^2]^\frac32} - \frac{\rho^2 + 2(\rho')^2 - \rho \rho''}{[R^2 + (R')^2]^\frac32}\Big| + \Big|\frac{\rho^2 + 2(\rho')^2 - \rho \rho''}{[R^2 + (R')^2]^\frac32} - \frac{R^2 + 2(R')^2 - R R''}{[R^2 + (R')^2]^\frac32}\Big|\\
    &\le \; C\Big|\big(\sqrt{\rho^2 + (\rho')^2}\,\big)^3 - \big(\sqrt{R^2 + (R')^2}\,\big)^3\Big| + C\Big|\rho^2-R^2 + 2(\rho')^2 - 2(R')^2 - \rho\rho'' + R R'' \Big|\\
    &\le \; C\big|\sqrt{\rho^2 + (\rho')^2} - \sqrt{R^2 + (R')^2}\big| + C\delta\\
    &\le \; C\delta^{\frac12} + C\delta \; \le \; C\delta^{\frac12}.
    \end{split}
\end{equation*}

Therefore, we  derive 
\begin{equation}\label{two}
    \begin{split}
        \Big|\text{\Rmnum{2}}\Big| \le&\; \Big|Q(D_\tau[\rho])\Big|\Big|\mu + \kappa_\rho \Big|\Big|\rho^2(\theta) - R^2(\theta)
   +\big(\rho(\hat{\theta})\rho'(\theta)-R(\hat{\theta})R'(\theta)\big)\sin(\hat{\theta}-\theta)\\
   &\hspace{17.2em}- \big(\rho(\hat{\theta})\rho(\theta)-R(\hat{\theta})R(\theta)\big)\cos(\hat{\theta}-\theta)\Big| \\
   &\,+ \Big|Q(D_\tau[\rho])\Big| \Big|\kappa_\rho -\kappa_R\Big| \Big|R^2(\theta) +R(\hat{\theta})R'(\theta)\sin(\hat{\theta}-\theta) - R(\hat{\theta})R(\theta)\cos(\hat{\theta}-\theta)\Big|\\
   &\,+ \Big|Q(D_\tau[\rho]) - Q(D_\tau[R])\Big| \Big|\mu + \kappa_R\Big| \Big|R^2(\theta) +R(\hat{\theta})R'(\theta)\sin(\hat{\theta}-\theta) - R(\hat{\theta})R(\theta)\cos(\hat{\theta}-\theta)\Big|\\
   \le&\; C|\ln \tau|\delta + C|\ln \tau|\delta^{\frac12} +  \frac{C}{\tau}\delta^{\frac12} \;\le\; C|\ln \tau|\delta^{\frac12} +  \frac{C}{\tau}\delta^{\frac12};
    \end{split}
\end{equation}

\begin{equation}\label{three}
    \begin{split}
        \Big|\text{\Rmnum{3}}\Big| \le&\; \Big|\frac{\kappa_\rho(\theta)- \kappa_\rho(\hat{\theta})}{2\pi (D_\tau[\rho])^2}\Big| \Big|\rho^2(\theta) - R^2(\theta)
   +\big(\rho(\hat{\theta})\rho'(\theta)-R(\hat{\theta})R'(\theta)\big)\sin(\hat{\theta}-\theta)\\
   &\hspace{17.2em}- \big(\rho(\hat{\theta})\rho(\theta)-R(\hat{\theta})R(\theta)\big)\cos(\hat{\theta}-\theta)\Big| \\
   &\, +  \Big|\frac{\kappa_\rho(\theta)- \kappa_\rho(\hat{\theta})}{2\pi(D_\tau [\rho])^2}-\frac{\kappa_\rho(\theta)- \kappa_\rho(\hat{\theta})}{2\pi(D_\tau[R])^2}\Big|\Big|R^2(\theta) +R(\hat{\theta})R'(\theta)\sin(\hat{\theta}-\theta) - R(\hat{\theta})R(\theta)\cos(\hat{\theta}-\theta)\Big|\\
   &\, + \Big|\frac{\kappa_\rho(\theta) - \kappa_R(\theta)-\kappa_\rho(\hat{\theta})+\kappa_R(\hat{\theta})}{2\pi(D_\tau[R])^2} \Big|\Big|R^2(\theta) +R(\hat{\theta})R'(\theta)\sin(\hat{\theta}-\theta) - R(\hat{\theta})R(\theta)\cos(\hat{\theta}-\theta)\Big|\\
   \le&\; \frac{C}{\tau^2}\delta + \frac{C}{\tau^3}\delta^{\frac12} + \frac{C}{\tau^2}\delta^{\frac12} \;\le \; \frac{C}{\tau^3}\delta^{\frac12}.
    \end{split}
\end{equation}

Now we are able to estimate $\big|\mathcal{L}_\tau [\rho](\hat{\theta})-\mathcal{L}_\tau[R](\hat{\theta})\big|$. Recall that
\begin{equation*}
    \big|\mathcal{L}_\tau [\rho](\hat{\theta}) -\mathcal{L}_\tau[R](\hat{\theta})\big| \le \beta \int_0^{2\pi} \big|\text{\Rmnum{1}}\big| \dif \theta + \int_0^{2\pi} \big|\text{\Rmnum{2}}\big| \dif \theta + \int_0^{2\pi} \big|\text{\Rmnum{3}}\big| \dif \theta;
\end{equation*}
together with \re{one}, \re{two} and \re{three}, it follows that, for each $\hat{\theta}\in [0,2\pi]$,
\begin{equation*}
\begin{split}
    \big|\mathcal{L}_\tau [\rho](\hat{\theta}) -\mathcal{L}_\tau[R](\hat{\theta})\big| &\le 2\beta\pi \max|\text{\Rmnum{1}}| + 2\pi \max|\text{\Rmnum{2}}| + 2\pi \max|\text{\Rmnum{3}}|\\
    &\le 2\beta\pi\Big(C|\ln \tau|\delta^{\frac12} + \frac{C}{\tau} \delta^{\frac12}\Big) + 2\pi \Big(C|\ln \tau|\delta^{\frac12} + \frac{C}{\tau}\delta^{\frac12}\Big) + 2\pi\frac{C}{\tau^3}\delta^{\frac12}\\
    &\le C|\ln \tau|\delta^{\frac12} + \frac{C}{\tau^3} \delta^{\frac12},
    \end{split}
\end{equation*}
which is equivalent to
\begin{equation}
    \label{taufr}
    \|\mathcal{L}_\tau [\rho]-\mathcal{L}_\tau[R]\|_{L^\infty} \le C|\ln \tau|\delta^{\frac12} + \frac{C}{\tau^3} \delta^{\frac12}.
\end{equation}

\subsection{Convergence theorem}

Here is our convergence theorem:
\begin{thm}
\label{main1}
Let $\mathcal{C}^n(\Psi)$ be given by \re{set} where $\Psi$ is assumed to be in $C^3(\mathbb{R})$, bounded and non-constant. Then for every $0<\delta < 1$, there exists $\rho(\theta;\mathcal{X})\in \cup_{N=1}^\infty \mathcal{C}^N$ and a positive constant $K$ such that
\begin{equation*}
    J(\rho(\theta;\mathcal{X})) \le K \delta^{\frac18},
\end{equation*}
where the constant $K$ does not depend upon $\delta$.  
\end{thm}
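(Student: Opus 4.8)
The plan is to bound the two constituents of the cost functional $J$ in \re{costf} separately and then optimize over the regularization parameter $\tau$. Write $J(\rho)=J_1+J_2$, where $J_1=\int_0^{2\pi}\big(\mathcal{L}_\tau[\rho](\hat\theta)\big)^2\nu(\hat\theta)\,\dif\hat\theta$ is the residual term and $J_2=\sum_{\alpha=0}^2\big(D^\alpha\rho(0)-D^\alpha\rho(2\pi)\big)^2$ is the periodicity term. First I would fix the exact free boundary $R$ solving \re{repre2}, so that $\mathcal{L}[R]\equiv0$, and, for a given $0<\delta<1$, invoke the density estimate \re{eps2} to select $\rho\in\cup_N\mathcal{C}^N$ with $\|\rho-R\|_{L^\infty}$, $\|\rho'-R'\|_{L^\infty}$, $\|\rho''-R''\|_{L^\infty}$, $\|\rho'''-R'''\|_{L^\infty}\le\delta$.

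For the periodicity term $J_2$, since $R$ is $2\pi$-periodic and $C^3$ we have $D^\alpha R(0)=D^\alpha R(2\pi)$ for $\alpha=0,1,2$, hence $D^\alpha\rho(0)-D^\alpha\rho(2\pi)=\big(D^\alpha\rho(0)-D^\alpha R(0)\big)-\big(D^\alpha\rho(2\pi)-D^\alpha R(2\pi)\big)$. Each of the two differences is bounded by $\delta$ through \re{eps2} (here only derivatives up to order two enter, which is why the loss in \re{loss} includes derivatives only up to second order), so $J_2\le C\delta^2$.

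For the residual term $J_1$, since $\nu$ is a probability density on $[0,2\pi]$ I would bound $J_1\le\|\mathcal{L}_\tau[\rho]\|_{L^\infty}^2$, and then apply the triangle inequality $\|\mathcal{L}_\tau[\rho]\|_{L^\infty}\le\|\mathcal{L}_\tau[R]\|_{L^\infty}+\|\mathcal{L}_\tau[\rho]-\mathcal{L}_\tau[R]\|_{L^\infty}$. The first summand is exactly the regularization estimate \re{tau}, and the second is the key estimate \re{taufr} established above, giving
\[
\|\mathcal{L}_\tau[\rho]\|_{L^\infty}\le C\tau|\ln\tau|+C\tau+C|\ln\tau|\delta^{1/2}+\frac{C}{\tau^3}\delta^{1/2}.
\]

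The crux is the choice $\tau=\tau(\delta)$, which must balance two competing errors: the regularization error $\tau|\ln\tau|$ from \re{tau} drives $\tau$ small, whereas the propagated approximation error $\delta^{1/2}/\tau^3$ from \re{taufr} — whose $\tau^{-3}$ blow-up originates in term \Rmnum{3}, the curvature-difference contribution carrying the $1/(D_\tau[R])^2$ kernel — forbids $\tau$ from being too small relative to $\delta$. Setting $\tau^4\sim\delta^{1/2}$, i.e. $\tau=\delta^{1/8}$, equates $\tau$ and $\delta^{1/2}/\tau^3$ at the common size $\delta^{1/8}$ and renders every summand $O\big(\delta^{1/8}|\ln\delta|\big)$; this balance is exactly what manufactures the exponent $\tfrac18$ in the theorem. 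Squaring then yields $J_1\le C\delta^{1/4}(\ln\delta)^2$, and since $\delta^{1/8}(\ln\delta)^2$ is bounded on $(0,1)$ I would absorb the logarithmic factor into the constant to obtain $J_1\le K_1\delta^{1/8}$; together with $J_2\le C\delta^2\le C\delta^{1/8}$ this gives $J(\rho)\le K\delta^{1/8}$ with $K$ independent of $\delta$. The main obstacle is genuinely the $\tau^{-3}$ factor: dropping the regularization would reinstate the kernel singularity and preclude any $L^\infty$ control, so the whole argument hinges on trading the regularization error against this blow-up at the optimal rate.
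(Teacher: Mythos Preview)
Your proposal is correct and follows essentially the same approach as the paper: split $J$ into residual and periodicity parts, control the residual by the triangle inequality using \re{tau} and \re{taufr}, control the periodicity term via \re{eps2} and the $2\pi$-periodicity of $R$, then pick $\tau=\delta^{1/8}$ to balance the $\tau$-regularization error against the $\delta^{1/2}/\tau^3$ blow-up and absorb the logarithmic factor. Your discussion of why $\tau=\delta^{1/8}$ is the right scale is in fact more explicit than the paper's own presentation.
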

\begin{proof}
We start from the definition of $J$ in \re{costf}. Notice that $R$ satisfies the $2\pi$-periodic boundary condition, i.e., $R(0) = R(2\pi)$, $R'(0) = R'(2\pi)$, $R''(0)=R''(2\pi)$. Therefore, we have
\begin{equation*}
    \begin{split}
    J(\rho(\theta;\mathcal{X})) \;=& \int_0^{2\pi}\big(\mathcal{L}_\tau [\rho](\hat{\theta}_i)\big)^2  \nu( \hat{\theta}_i)\, \dif \hat{\theta}_i \,+ \, \sum_{\alpha=0}^2\Big(D^\alpha\big(\rho(0;\mathcal{X}) - \rho(2\pi;\mathcal{X})\big)\Big)^2\\
    =& \int_0^{2\pi} \big(\mathcal{L}_\tau[\rho](\hat{\theta}_i) -\mathcal{L}_\tau[R](\hat{\theta}_i) + \mathcal{L}_\tau[R](\hat{\theta}_i)\big)^2 \nu(\hat{\theta}_i)\, \dif \hat{\theta}_i \\
    &\hspace{12em}+ \sum_{\alpha=0}^2\Big(D^\alpha\big(\rho(0;\mathcal{X})-R(0) - \rho(2\pi;\mathcal{X})+R(2\pi)\big)\Big)^2\\
   \le&\; 2\int_0^{2\pi} \big(\mathcal{L}_\tau[\rho](\hat{\theta}_i)-\mathcal{L}_\tau[R](\hat{\theta}_i)\big)^2 \nu(\hat{\theta}_i)\, \dif \hat{\theta}_i \,+\, 2\int_0^{2\pi}\big(\mathcal{L}_\tau[R](\hat{\theta}_i)\big)^2 \nu(\hat{\theta}_i)\, \dif \hat{\theta}_i \\
    &\hspace{12em}+ 2\sum_{\alpha=0}^2\Big(\big(\rho(0;\mathcal{X})-R(0)\big)\Big)^2 + 2\sum_{\alpha=0}^2\Big( \big(\rho(2\pi;\mathcal{X})-R(2\pi)\big)\Big)^2\\
    \le&\; 2\,\|\mathcal{L}_\tau [\rho]-\mathcal{L}_\tau[R]\|_{L^\infty}^2 + 2\,\|\mathcal{L}_\tau[R]\|_{L^\infty}^2 + 12\,\|\rho-R\|_{2,[0,2\pi]}^2.
    \end{split}
\end{equation*}
We combine it with estimates \re{tau}, \re{eps2} as well as \re{taufr}, and use the fact that $(a+b)^2 \le 2(a^2+b^2)$, to obtain
\begin{equation*}
    J(\rho(\theta;\mathcal{X})) \;\le\; C\Big(|\ln \tau|^2 \delta + \frac{\delta}{\tau^6} + |\ln \tau|^2 \tau^2 +  \tau^2 +  \delta^2 \Big).
\end{equation*}
Take $\tau=\delta^{\frac18}$, we have
\begin{equation*}
\begin{split}
    J(\rho(\theta;\mathcal{X})) &\le C\Big(|\ln \delta^{\frac18}|^2 \delta + \delta^{\frac14} + |\ln \delta^{\frac18}|^2 \delta^{\frac14} + \delta^{\frac14} +  \delta^2 \Big)\le C\Big(|\ln \delta^{\frac18}|^2 \delta^{\frac14} +\delta^{\frac14} \Big).
    \end{split}
\end{equation*}
It is easy to show, for $0<x<1$,
$$|\ln x|^2 x^2 < x;$$
hence when $0<\delta <1$,
$$|\ln \delta^{\frac18}|^2 \delta^{\frac14} \le \delta^{\frac18}.$$
By choosing $K=2C$, we finally obtain
\begin{equation*}
    J(f) \le C\Big(\delta^{\frac18} + \delta^{\frac14}\Big)\le 2\,C \delta^{\frac18} \le K\delta^{\frac18}
\end{equation*}
which completes the proof.
\end{proof}

\section{Convergence for the Stochastic Gradient Descent}
In this section, we will prove the convergence of the stochastic gradient descent. Recall the loss function we consider in \re{loss}, 
\begin{equation}\label{F1}
    F(\mathcal{X}, \bm{\hat{\theta}}) = \frac1m \sum_{i=1}^m \Big(\mathcal{L}_\tau [\rho](\hat{\theta}_i)\Big)^2 + \sum_{\alpha=0}^2\Big(D^\alpha \rho(0;\mathcal{X}) - D^\alpha \rho(2\pi;\mathcal{X}) \Big)^2,
\end{equation}
and 
\begin{eqnarray}
J(\mathcal{X}) &=& \mathbb{E}_{\bm{\hat{\theta}}}[ F(\mathcal{X},\bm{\hat{\theta}})]\label{J},\\
G(\mathcal{X},\bm{\hat{\theta}}) &=& \nabla_\mathcal{X} F(\mathcal{X},\bm{\hat{\theta}}).\label{G}
\end{eqnarray}



We have the following convergence result for the stochastic gradient descent:
\begin{thm}
\label{main2}
Assume further the activation function $\Psi$ is $C^4(\mathbb{R})$, and $\{\mathcal{X}_k\}$ is contained in a bounded open set, let the stochastic gradient descent method (Algorithm \ref{alg}) run with a stepsize sequence satisfying
\begin{equation*}
    \sum\limits_{k=1}^\infty \alpha_k = \infty \;\text{  and  }\; \sum\limits_{k=1}^\infty \alpha_k^2 < \infty,
\end{equation*}
then, with $A_k:= \sum_{k=1}^K \alpha_k$, the following convergence is obtained,
\begin{equation*}
    \mathbb{E}\Big[\frac{1}{A_k} \sum_{k=1}^K \alpha_k \|\nabla F(\mathcal{X})\|_2^2\Big] \rightarrow 0 \;\text{ as }\; K\rightarrow \infty.
\end{equation*}
\end{thm}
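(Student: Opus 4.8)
The plan is to recognize this as an instance of the standard convergence theory for stochastic gradient descent on a smooth but nonconvex objective (in the spirit of the analysis of Bottou, Curtis and Nocedal), and to verify that its three structural hypotheses hold for our objective $J(\mathcal{X}) = \mathbb{E}_{\bm{\hat{\theta}}}[F(\mathcal{X},\bm{\hat{\theta}})]$. Specifically, I would establish: (i) $J$ is bounded below and has a Lipschitz continuous gradient ($L$-smoothness); (ii) the sampled gradient is an unbiased estimator, $\mathbb{E}_{\bm{\hat{\theta}}}[G(\mathcal{X},\bm{\hat{\theta}})\,|\,\mathcal{X}] = \nabla J(\mathcal{X})$; and (iii) the second moment $\mathbb{E}_{\bm{\hat{\theta}}}[\|G(\mathcal{X},\bm{\hat{\theta}})\|_2^2]$ is uniformly bounded by a constant $M$ on the bounded open set containing $\{\mathcal{X}_k\}$. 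Granted these three facts, the conclusion follows from a routine descent-lemma-plus-telescoping computation combined with the Robbins--Monro step-size conditions.

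For hypothesis (i) I note that $J\ge 0$, being a sum of squares, so it is bounded below by $0$; the Lipschitz gradient is where the regularity assumptions enter. Because the iterates lie in a fixed bounded open set and $\theta$ ranges over the compact interval $[0,2\pi]$, the arguments $b_i\theta + c_i$ of $\Psi$ stay in a compact set, so $\rho(\theta;\mathcal{X})$ together with its $\theta$-derivatives up to third order and its $\mathcal{X}$-derivatives are uniformly bounded, provided $\Psi\in C^4$: the curvature $\kappa_\rho$ in the integrand already involves $\rho''$ (hence $\Psi''$), and forming the Hessian of $F$ in $\mathcal{X}$ adds one more $\theta$-derivative and two more $\mathcal{X}$-derivatives, so $\Psi^{(4)}$ is exactly the order required. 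Crucially, $\tau>0$ is held fixed, so $D_\tau[\rho]\ge\tau>0$ and the kernels $G_1(D_\tau[\rho])$, $Q(D_\tau[\rho])$ and $(D_\tau[\rho])^{-2}$ are smooth with bounded derivatives on this set, the computation being entirely analogous to the $\delta$-estimates leading to \re{taufr}, now differentiating in $\mathcal{X}$ rather than comparing $\rho$ with $R$; together with the standing restriction to boundaries bounded away from the origin, so $\rho^2+(\rho')^2$ stays bounded below, this produces a uniform Hessian bound and hence $L$-smoothness of $J$. The same uniform bounds give (iii) directly, and (ii) follows from the i.i.d. sampling of the $\hat{\theta}_{k,i}$ with density $\nu$ together with the boundedness that justifies interchanging $\nabla_\mathcal{X}$ and $\mathbb{E}_{\bm{\hat{\theta}}}$ by dominated convergence; the deterministic periodic boundary term passes through unchanged.

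With (i)--(iii) in hand I would apply the descent lemma implied by $L$-smoothness, $J(\mathcal{X}_{k+1}) \le J(\mathcal{X}_k) + \nabla J(\mathcal{X}_k)^T(\mathcal{X}_{k+1}-\mathcal{X}_k) + \tfrac{L}{2}\|\mathcal{X}_{k+1}-\mathcal{X}_k\|_2^2$, substitute the update $\mathcal{X}_{k+1}-\mathcal{X}_k = -\alpha_k G(\mathcal{X}_k,\bm{\hat{\theta}}_k)$, and take the conditional expectation given $\mathcal{X}_k$. Using (ii) and (iii) this yields $\mathbb{E}[J(\mathcal{X}_{k+1})\,|\,\mathcal{X}_k] \le J(\mathcal{X}_k) - \alpha_k\|\nabla J(\mathcal{X}_k)\|_2^2 + \tfrac{LM}{2}\alpha_k^2$. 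Taking total expectations, summing over $k=1,\dots,K$ and telescoping gives $\sum_{k=1}^K \alpha_k\,\mathbb{E}[\|\nabla J(\mathcal{X}_k)\|_2^2] \le J(\mathcal{X}_1) + \tfrac{LM}{2}\sum_{k=1}^K\alpha_k^2$, whose right-hand side is bounded uniformly in $K$ since $J\ge 0$ and $\sum_k\alpha_k^2<\infty$. Dividing by $A_K=\sum_{k=1}^K\alpha_k$ and using $\sum_k\alpha_k=\infty$, so that $A_K\to\infty$, drives the weighted average to $0$, which is the assertion (the $\nabla F$ of the statement being read as the gradient $\nabla J$ of the true objective $J=\mathbb{E}_{\bm{\hat{\theta}}}[F]$).

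The main obstacle lies entirely in step (i): producing a single Lipschitz constant $L$ for $\nabla J$ valid over the whole bounded set. This demands differentiating the regularized boundary-integral operator $\mathcal{L}_\tau$ twice in the network parameters and controlling every resulting term, since each chain-rule application either differentiates a kernel evaluated at $D_\tau[\rho]\ge\tau$ or differentiates the network and its curvature; the bookkeeping is lengthy, although each individual bound is elementary once $\tau>0$ is fixed and the parameter range is compact. Everything after that is the standard stochastic-approximation argument.
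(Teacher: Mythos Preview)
Your proposal is correct and follows essentially the same route as the paper: both reduce the result to the nonconvex SGD convergence framework of Bottou--Curtis--Nocedal by verifying (a) Lipschitz continuity of $\nabla_\mathcal{X} J$ via explicit chain-rule differentiation of $\mathcal{L}_\tau[\rho]$ and bounding the Hessian using $D_\tau[\rho]\ge\tau>0$, boundedness of the parameter set, and $\Psi\in C^4$, and (b) the unbiasedness and bounded second-moment conditions on $G=\nabla_\mathcal{X} F$. The only cosmetic difference is that the paper invokes their Theorem 4.10 as a black box for the final step, whereas you spell out the descent-lemma-plus-telescoping argument yourself.
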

\begin{proof}
To proof Theorem \ref{main2}, it suffices to verify the assumptions 4.1 and 4.3 in \cite{optimization}:

\subsection{Lipschitz-continuous objective gradients}
First, let's prove that the objective function $J(\mathcal{X})$ is continuously differentiable and the gradient function of $J$, namely, $\nabla_\mathcal{X} J$ is Lipschitz continuous (cf., \cite{optimization}). Since
\begin{equation}\label{FJ}
    \nabla_\mathcal{X} J(\mathcal{X}) = \nabla_\mathcal{X} \mathbb{E}_{\bm{\hat{\theta}}} [F(\mathcal{X},{\bm{\hat{\theta}}})] = \mathbb{E}_{\bm{\hat{\theta}}}[\nabla_\mathcal{X} F(\mathcal{X},{\bm{\hat{\theta}}})],
\end{equation}
we find that  $\nabla_\mathcal{X} J$ is Lipschitz continuous if $\nabla_\mathcal{X}F(\mathcal{X},{\bm{\hat{\theta}}})$ is bounded and Lipschitz continuous. Therefore, it suffices to derive the regularity of $\nabla_\mathcal{X}F(\mathcal{X},{\bm{\hat{\theta}}})$. 

Using the definition of $F(\mathcal{X},{\bm{\hat{\theta}}})$ in \re{F1}, we have
\begin{equation*}
\begin{split}
    \nabla_\mathcal{X} F(\mathcal{X},{\bm{\hat{\theta}}}) =&\; \frac2m \sum_{i=1}^m \big(\mathcal{L}_\tau[\rho](\hat{\theta}_i)\big) \nabla_\mathcal{X}\mathcal{L}_\tau[\rho](\hat{\theta}_i) \\
    &\hspace{2em}+ 2\sum_{\alpha=0}^2 \big(D^\alpha \rho(0;\mathcal{X})-D^\alpha \rho(2\pi;\mathcal{X})\big)\cdot \nabla_\mathcal{X}\big(D^\alpha \rho(0;\mathcal{X})-D^\alpha\rho(2\pi;\mathcal{X})\big),
    \end{split}
\end{equation*}
where $\mathcal{L}_\tau [\rho](\hat{\theta}_i)$ is defined in \re{Ltauf}. For brevity, we denote the integrand in $\mathcal{L}_\tau [\rho](\hat{\theta}_i)$ to be function $M$, namely,
\begin{equation}\label{m}
    \begin{split}
       M(\theta,\hat{\theta}_i) =& \; M(\rho(\theta), \rho(\hat{\theta}_i),\rho'(\theta), \rho'(\hat{\theta}_i),\rho''(\theta), \rho''(\hat{\theta}_i))\\
       =&\; \beta G_1(D_\tau[\rho]) \sqrt{[\rho'(\theta)]^2 +  \rho^2(\theta)} - \Big((\mu+\kappa_\rho(\theta))Q(D_\tau[\rho]) - \frac{\kappa_\rho(\theta)-\kappa_\rho(\hat{\theta}_i)}{2\pi (D_\tau[\rho])^2}\Big)\\
   & \hspace{15em}\cdot\Big(\rho^2(\theta)+\rho(\hat{\theta}_i)\rho'(\theta)\sin(\hat{\theta}_i-\theta) - \rho(\hat{\theta}_i)\rho(\theta)\cos(\hat{\theta}_i-\theta)\Big).
    \end{split}
\end{equation}
We notice that the function $M$ involves $\rho$, $\rho'$, as well as $\rho''$. In our settings, 
\begin{equation}\label{f}
    \rho(\theta) = \sum a_i \Psi(b_i \theta + c_i) + d,
\end{equation}
hence
\begin{gather}
    \rho'(\theta) = \sum a_i b_i \Psi'(b_i \theta + c_i),\label{f'}\\
    \label{f''} \rho''(\theta) = \sum a_i b_i^2 \Psi''(b_i \theta + c_i).
\end{gather}
Clearly, for each $\hat{\theta}_i$,
\begin{equation}\label{gL}
    \nabla_\mathcal{X} \mathcal{L}_\tau[\rho](\hat{\theta}_i) = \int_0^{2\pi} \nabla_\mathcal{X} M(\theta,\hat{\theta}_i)\, \dif \theta,
\end{equation}
and, by the chain rules,
\begin{equation}
    \label{md}
\begin{split}
\nabla_\mathcal{X} M\; =& \;\frac{\p M}{\p \rho(\theta)}\nabla_\mathcal{X} \rho(\theta) + \frac{\p M}{\p \rho'(\theta)}\nabla_\mathcal{X}\rho'(\theta) + \frac{\p M}{\p \rho''(\theta)}\nabla_\mathcal{X}\rho''(\theta)\\
&+\frac{\p M}{\p \rho(\hat{\theta}_i)}\nabla_\mathcal{X} \rho(\hat{\theta}_i) + \frac{\p M}{\p \rho'(\hat{\theta}_i)}\nabla_\mathcal{X}\rho'(\hat{\theta}_i) + \frac{\p M}{\p \rho''(\hat{\theta}_i)}\nabla_\mathcal{X}\rho''(\hat{\theta}_i)
\end{split}
\end{equation}
Let's then deal with each term in \re{md}. It follows from \re{m} that
\begin{equation*}
    \begin{split}
\frac{\p M}{\p \rho(\theta)} = & \;  \beta G_1'(D_\tau[\rho])\frac{\p D_\tau[\rho]}{\p \rho(\theta)} \sqrt{[\rho'(\theta)]^2 +  \rho^2(\theta)}
        + \beta G_1(D_\tau[\rho])\frac{  \rho(\theta)}{\sqrt{[\rho'(\theta)]^2 +  \rho^2(\theta)}}- \Big(\big(\mu+\kappa_\rho(\theta)\big)  \\
        &Q'(D_\tau[\rho])+ \frac{\kappa_\rho(\theta)-\kappa_\rho(\hat{\theta}_i)}{\pi (D_\tau[\rho])^3}\Big) \frac{\p D_\tau[\rho]}{\p \rho(\theta)}\Big(\rho^2(\theta)+ \rho(\hat{\theta}_i)\rho'(\theta)\sin(\hat{\theta}_i-\theta)-\rho(\hat{\theta}_i)\rho(\theta)\cos(\hat{\theta}_i-\theta)\Big)\\
        &- \Big(Q(D_\tau[\rho])- \frac{1}{2\pi(D_\tau[\rho])^2}\Big)\frac{\p \kappa_\rho(\theta)}{\p \rho(\theta)}\Big(\rho^2(\theta)+ \rho(\hat{\theta}_i)\rho'(\theta)\sin(\hat{\theta}_i-\theta)-\rho(\hat{\theta}_i)\rho(\theta)\cos(\hat{\theta}_i-\theta)\Big)\\
        & - \Big(Q(D_\tau[\rho])\big(\mu+\kappa_\rho(\theta)\big)- \frac{\kappa_\rho(\theta)-\kappa_\rho(\hat{\theta}_i)}{2\pi (D_\tau[\rho])^2}\Big)\Big(2\rho(\theta)-\rho(\hat{\theta}_i)\cos(\hat{\theta}_i -\theta)\Big),\\
\frac{\p M}{\p \rho(\hat{\theta}_i)} = & \; \beta G_1'(D_\tau[\rho])\frac{\p D_\tau[\rho]}{\p \rho(\hat{\theta}_i)} \sqrt{[\rho'(\theta)]^2 +  \rho^2(\theta)} - \Big(\big(\mu+\kappa_\rho(\theta)\big)Q'(D_\tau[\rho]) + \frac{\kappa_\rho(\theta)-\kappa_\rho(\hat{\theta}_i)}{\pi (D_\tau[\rho])^3}\Big) \frac{\p D_\tau[\rho]}{\p \rho(\hat{\theta}_i)} \\
        &\Big(\rho^2(\theta) + \rho(\hat{\theta}_i)\rho'(\theta)\sin(\hat{\theta}_i-\theta)-\rho(\hat{\theta}_i)\rho(\theta)\cos(\hat{\theta}_i-\theta)\Big)- \frac{1}{2\pi(D_\tau[\rho])^2}\frac{\p \kappa_\rho(\hat{\theta}_i)}{\p \rho(\hat{\theta}_i)}\Big(\rho^2(\theta) \\
        &+\rho(\hat{\theta}_i)\rho'(\theta)\sin(\hat{\theta}_i-\theta)-\rho(\hat{\theta}_i)\rho(\theta)\cos(\hat{\theta}_i-\theta)\Big) - \Big(\big(\mu+\kappa_\rho(\theta)\big)Q(D_\tau[\rho])- \frac{\kappa_\rho(\theta)-\kappa_\rho(\hat{\theta}_i)}{2\pi (D_\tau[\rho])^2}\Big)\\
        &\Big(\rho'(\theta)\sin(\hat{\theta}_i - \theta)- \rho(\theta)\cos(\hat{\theta}_i-\theta)\Big),\\
\frac{\p M}{\p \rho'(\theta)} =&\; \beta G_1(D_\tau[\rho])\frac{\rho'(\theta)}{\sqrt{[\rho'(\theta)]^2 +  \rho^2(\theta)}} - \frac{\p \kappa_\rho(\theta)}{\p \rho'(\theta)}\Big( Q(D_\tau[\rho])- \frac{1}{2\pi (D_\tau[\rho])^2}\Big) \Big(\rho(\hat{\theta}_i)\rho'(\theta)\sin(\hat{\theta}_i-\theta)\\
    &+\rho^2(\theta)-\rho(\hat{\theta}_i)\rho(\theta)\cos(\hat{\theta}_i-\theta)\Big) - \Big(\big(\mu+\kappa_\rho(\theta)\big)Q(D_\tau[\rho])- \frac{\kappa_\rho(\theta)-\kappa_\rho(\hat{\theta}_i)}{2\pi (D_\tau[\rho])^2}\Big)\rho(\hat{\theta}_i)\sin(\hat{\theta}_i-\theta),\\
\frac{\p M}{\p \rho'(\hat{\theta}_i)} =&\;  - \frac{1}{2\pi (D_\tau[\rho])^2}\frac{\p \kappa_\rho(\hat{\theta}_i)}{\p \rho'(\hat{\theta}_i)} \Big(\rho^2(\theta)+ \rho(\hat{\theta}_i)\rho'(\theta)\sin(\hat{\theta}_i-\theta)-\rho(\hat{\theta}_i)\rho(\theta)\cos(\hat{\theta}_i-\theta)\Big),\\
\frac{\p M}{\p \rho''(\theta)} =&\;  - \frac{\p \kappa_\rho(\theta)}{\p \rho''(\theta)}\Big( Q(D_\tau[\rho])- \frac{1}{2\pi (D_\tau[\rho])^2}\Big) \Big(\rho^2(\theta)+ \rho(\hat{\theta}_i)\rho'(\theta)\sin(\hat{\theta}_i-\theta)-\rho(\hat{\theta}_i)\rho(\theta)\cos(\hat{\theta}_i-\theta)\Big),\\
\frac{\p M}{\p \rho''(\hat{\theta}_i)} =&\;  - \frac{1}{2\pi (D_\tau[\rho])^2}\frac{\p \kappa_\rho(\hat{\theta}_i)}{\p \rho''(\hat{\theta}_i)} \Big(\rho^2(\theta)+ \rho(\hat{\theta}_i)\rho'(\theta)\sin(\hat{\theta}_i-\theta)-\rho(\hat{\theta}_i)\rho(\theta)\cos(\hat{\theta}_i-\theta)\Big),
    \end{split}
\end{equation*}
where by \re{Dtauf}
\begin{eqnarray*}
    \frac{\p D_\tau[\rho]}{\p \rho(\theta)} =  \frac{\rho(\theta)-\rho(\hat{\theta}_i)\cos(\hat{\theta}_i-\theta)}{D_\tau[\rho]}, \hspace{2em} 
    \frac{\p D_\tau[\rho]}{\p \rho(\hat{\theta}_i)} =  \frac{\rho(\hat{\theta}_i) - \rho(\theta)\cos(\hat{\theta}_i-\theta)}{D_\tau[\rho]},
\end{eqnarray*}
and by \re{kappa12}
\begin{gather*}
    \frac{\p \kappa_\rho}{\p \rho} = \frac{-\rho^3 - 4\rho (\rho')^2 + 2\rho^2 \rho'' - \rho''(\rho')^2}{[\rho^2 +(\rho')^2]^{\frac52}},\\
    \frac{\p \kappa_\rho}{\p \rho'} = \frac{\rho'[\rho^2 - 2(\rho')^2 + 3\rho \rho'']}{[\rho^2 +(\rho')^2]^{\frac52}}, \hspace{2em} \text{and }\;\; \frac{\p \kappa_\rho}{\p \rho''} = \frac{-\rho}{[\rho^2 +(\rho')^2]^{\frac32}}.
\end{gather*}
Notice that 
\begin{equation*}
    D_\tau[\rho] \ge \tau > 0,
\end{equation*}
the singular point $r=0$ for $G_1(r)$, $G_1'(r)$, $Q(r)$, $Q'(r)$, $1/r^2$, and $1/r^3$ are not present in the above expression. Hence all the first-order derivatives of $M$ are bounded. Similarly, we can take another derivative to prove that all the second-order derivatives are also bounded. 
On the other hand,  $\nabla_\mathcal{X} \rho(\theta)$, $\nabla_\mathcal{X} \rho'(\theta)$, and $\nabla_\mathcal{X} \rho''(\theta)$ in \re{md} are computed by
\begin{gather*}
    \nabla_\mathcal{X} \rho(\theta) =  \Big(\cdots,\frac{\p \rho(\theta)}{\p a_i},\cdots,\frac{\p \rho(\theta)}{\p b_i},\cdots,\frac{\p \rho(\theta)}{\p c_i},\cdots, \frac{\p \rho(\theta)}{\p d}\Big),\\
    \nabla_\mathcal{X} \rho'(\theta) =  \Big(\cdots,\frac{\p \rho'(\theta)}{\p a_i},\cdots,\frac{\p \rho'(\theta)}{\p b_i},\cdots,\frac{\p \rho'(\theta)}{\p c_i},\cdots, \frac{\p \rho'(\theta)}{\p d}\Big),\\
    \nabla_\mathcal{X} \rho''(\theta) =  \Big(\cdots,\frac{\p \rho''(\theta)}{\p a_i},\cdots,\frac{\p \rho''(\theta)}{\p b_i},\cdots,\frac{\p \rho''(\theta)}{\p c_i},\cdots, \frac{\p \rho''(\theta)}{\p d}\Big).
\end{gather*}
From \re{f}, \re{f'}, as well as \re{f''}, we deduce 
\begin{align*}
        \frac{\p \rho(\theta)}{\p a_i} &= \Psi(b_i\theta + c_i), \hspace{2em} &\frac{\p \rho(\theta)}{\p b_i} &= a_i \theta \Psi'(b_i\theta+c_i),\\
        \frac{\p \rho(\theta)}{\p c_i} &= a_i\Psi'(b_i\theta + c_i),\hspace{2em} &\frac{\p \rho(\theta)}{\p d} &= 1;\\
        \frac{\p \rho'(\theta)}{\p a_i} &= b_i\Psi'(b_i\theta + c_i),\hspace{2em} &\frac{\p \rho'(\theta)}{\p b_i} &= a_i  \Psi'(b_i\theta+c_i)+ a_ib_i\theta \Psi''(b_i\theta+c_i),\\
        \frac{\p \rho'(\theta)}{\p c_i} &= a_ib_i\Psi''(b_i\theta + c_i), \hspace{2em}&\frac{\p \rho'(\theta)}{\p d} &= 0;\\
        \frac{\p \rho''(\theta)}{\p a_i} &= b_i^2 \Psi''(b_i\theta + c_i),\hspace{2em} &\frac{\p \rho''(\theta)}{\p b_i } &= 2a_i b_i \Psi''(b_i\theta + c_i) + a_i b_i^2 \theta \Psi'''(b_i\theta + c_i),\\
        \frac{\p \rho''(\theta)}{\p c_i} &= a_i b_i^2 \Psi'''(b_i\theta + c_i), \hspace{2em} &\frac{\p \rho''(\theta)}{\p d} &= 0.
\end{align*}
Similar calculations also work for $\nabla_\mathcal{X}\rho(\hat{\theta}_i)$, $\nabla_\mathcal{X}\rho'(\hat{\theta}_i)$, and $\nabla_\mathcal{X}\rho''(\hat{\theta}_i)$. 
Since the activation function $\Psi\in C^4$ in our assumptions, $\nabla_\mathcal{X} \rho$, $\nabla_\mathcal{X} \rho'$, and $\nabla_\mathcal{X} \rho''$ are all bounded. In the same manner, we can calculate second derivatives of $\rho$, $\rho'$, and $\rho''$, namely, the Hessian functions $\nabla_\mathcal{X}^2 \rho, \nabla_\mathcal{X}^2 \rho', \nabla_\mathcal{X}^2 \rho'': \, \mathbb{R}^d \rightarrow \mathbb{R}^{d\times d}$. We shall find that $\nabla_\mathcal{X}^2 \rho$, $\nabla_\mathcal{X}^2 \rho'$, $\nabla_\mathcal{X}^2 \rho''$ contain at most fourth-order derivatives of $\Psi$, hence they are all bounded as $\Psi\in C^4$. 

Combining the above analysis, we find that $\nabla_\mathcal{X} F(\mathcal{X},{\bm{\hat{\theta}}})$ is bounded. To further claim that it is Lipschitz continuous, we take another derivative with respect to $\mathcal{X}$ to derive
\begin{equation*}
\small
\begin{split}
    \nabla_\mathcal{X}^2 F(\mathcal{X},{\bm{\hat{\theta}}}) =&\; \frac2m \sum_{i=1}^m \Big[ \big(\nabla_\mathcal{X} \mathcal{L}_\tau [\rho](\hat{\theta}_i)\big)^T \nabla_\mathcal{X} \mathcal{L}_\tau [\rho](\hat{\theta}_i) + \big( \mathcal{L}_\tau [\rho](\hat{\theta}_i)\big)\nabla_\mathcal{X}^2  \mathcal{L}_\tau [\rho](\hat{\theta}_i)\Big]\\
    & \hspace{1em}+2\sum_{\alpha=0}^2 \Big(\nabla_\mathcal{X}\big(D^\alpha (\rho(0;\mathcal{X})-\rho(2\pi;\mathcal{X})\big)\Big)^T\Big(\nabla_\mathcal{X}\big(D^\alpha (\rho(0;\mathcal{X})-\rho(2\pi;\mathcal{X})\big)\Big)\\
    &\hspace{1em}+2\sum_{\alpha=0}^2 \Big(D^\alpha (\rho(0;\mathcal{X})-\rho(2\pi;\mathcal{X})\Big)\Big(\nabla_\mathcal{X}^2\big(D^\alpha (\rho(0;\mathcal{X})-\rho(2\pi;\mathcal{X})\big)\Big),\\
    \end{split}
\end{equation*}
where by \re{gL} and \re{md},
\begin{equation*}
\nabla_\mathcal{X}^2 \mathcal{L}_\tau[\rho](\hat{\theta}_i) = \int_0^{2\pi} \nabla_\mathcal{X}^2 M(\theta,\hat{\theta}_i) \,\dif \theta,
\end{equation*}
\begin{equation*}
    \nabla_\mathcal{X}^2 M = \frac{\p^2 M}{\p \rho(\theta)^2}\big(\nabla_\mathcal{X}\rho(\theta)\big)^T \nabla_\mathcal{X} \rho(\theta) + \frac{\p M}{\p \rho(\theta)}\nabla_\mathcal{X}^2 \rho(\theta) + 2\frac{\p^2 M}{\p \rho(\theta)\rho(\hat{\theta}_i)}\big(\nabla_\mathcal{X}\rho(\theta)\big)^T \nabla_\mathcal{X} \rho(\hat{\theta}_i) + \cdots.
\end{equation*}
According to the above analysis, each term in the above formula for $\nabla^2_\mathcal{X} F(\mathcal{X},{\bm{\hat{\theta}}})$ is bounded, hence $\nabla_\mathcal{X}F(\mathcal{X},{\bm{\hat{\theta}}})$ is Lipschitz continuous in $\mathcal{X}$. Using \re{FJ}, we conclude that the objective gradient function $\nabla_\mathcal{X}J(\mathcal{X})$ is also Lipschitz continuous.

\bigskip
\subsection{First and second moment limits}
Next, we shall prove the first and second moment limits condition in \cite{optimization}. The proof can be justified in three parts:

\vspace{5pt}

\noindent (a) According to our assumptions in Theorem \ref{main2}, $\{\mathcal{X}_k\}$ is contained in an open set which is bounded. From 4.1, $J(\mathcal{X})$ is continuously differentiable, hence $J(\mathcal{X}_k)$ is clearly bounded.

\vspace{5pt}

\noindent (b) Since $G(\mathcal{X}_k,{\bm{\hat{\theta}}}_k) =  \nabla_\mathcal{X} F(\mathcal{X}_k, {\bm{\hat{\theta}}}_k)$, we have
\begin{equation*}
    \mathbb{E}_{{\bm{\hat{\theta}}}_k}[G(\mathcal{X}_k,{\bm{\hat{\theta}}}_k)] =\mathbb{E}_{{\bm{\hat{\theta}}}_ k}[\nabla_{\mathcal{X}}F(\mathcal{X}_k,{\bm{\hat{\theta}}}_k)] = \nabla_\mathcal{X} \Big(\mathbb{E}_{{\bm{\hat{\theta}}}_k}[F(\mathcal{X}_k, {\bm{\hat{\theta}}}_k)]\Big) = \nabla_{\mathcal{X}} J(\mathcal{X}_k).
\end{equation*}
Therefore,
\begin{equation*}
    \nabla_\mathcal{X} J(\mathcal{X}_k)^T \mathbb{E}_{{\bm{\hat{\theta}}}_k}[G(\mathcal{X}_k,{\bm{\hat{\theta}}}_k)] = \nabla_\mathcal{X} J(\mathcal{X}_k)^T \cdot \nabla_\mathcal{X} J(\mathcal{X}_k) = \|\nabla_\mathcal{X} J(\mathcal{X}_k)\|_2^2.
\end{equation*}
It then directly follows that
\begin{equation*}
    \nabla_\mathcal{X} J(\mathcal{X}_k)^T \mathbb{E}_{{\bm{\hat{\theta}}}_k}[G(\mathcal{X}_k,{\bm{\hat{\theta}}}_k)] \ge u\|\nabla_\mathcal{X} J(\mathcal{X}_k)\|_2^2,
\end{equation*}
\begin{equation*}
    \text{and }\quad \|\mathbb{E}_{{\bm{\hat{\theta}}}_k}[G(\mathcal{X}_k,{{\bm{\hat{\theta}}}_k})]\|_2 = \|\nabla_\mathcal{X} J(\mathcal{X}_k)\|_2 \le u_G\|\nabla_\mathcal{X} J(\mathcal{X}_k)\|_2
\end{equation*}
hold true for some $0<u\le 1$ and $u_G\ge 1$. 

\vspace{5pt}

\noindent (c) Based on the analysis in 4.1, we know that $G(\mathcal{X}_k, {\bm{\hat{\theta}}}_k)= \nabla_\mathcal{X}F(\mathcal{X}_k,{\bm{\hat{\theta}}}_k)$ is bounded for each $k$, hence $\mathbb{E}_{{\bm{\hat{\theta}}}_k}[\|G(\mathcal{X}_k, {\bm{\hat{\theta}}}_k)\|_2^2]$ is also bounded. 
Since 
\begin{equation*}
    \mathbb{V}_{{\bm{\hat{\theta}}}_k}[G(\mathcal{X}_k,{\bm{\hat{\theta}}}_k)] = \mathbb{E}_{{\bm{\hat{\theta}}}_k}[\|G(\mathcal{X}_k, {\bm{\hat{\theta}}}_k)\|_2^2] - \|\mathbb{E}_{{\bm{\hat{\theta}}}_k} [G(\mathcal{X}_k,{\bm{\hat{\theta}}}_k)]\|^2_2 \le \mathbb{E}_{{\bm{\hat{\theta}}}_k}[\|G(\mathcal{X}_k, {\bm{\hat{\theta}}}_k)\|_2^2],
\end{equation*}
it indicates that $\mathbb{V}_{{\bm{\hat{\theta}}}_k}[G(\mathcal{X}_k,{\bm{\hat{\theta}}}_k)]$ is bounded.

\vspace{10pt}
We have verified the two sufficient assumptions in \cite{optimization}. 
Using the Theorem 4.10 in \cite{optimization}, with the diminishing step-size, i.e.,
\begin{equation*}
    \sum\limits_{k=1}^\infty \alpha_k = \infty \;\text{  and  }\; \sum\limits_{k=1}^\infty \alpha_k^2 < \infty,
\end{equation*}
we get the convergence result, i.e., $\mathbb{E}\Big[\frac{1}{A_k} \sum_{k=1}^K \alpha_k \|\nabla F(\mathcal{X})\|_2^2\Big] \rightarrow 0$ as $K\rightarrow \infty$.
\end{proof}

The stochastic gradient decent method generates critical points.
We are only interested in those critical points whose loss
function is close to zero, hence generating an approximate solution to our free boundary problem -- this is achieved in 
our numerical examples.

\section{Numerical Results}

\subsection{Verification of the neural network discretization near bifurcation points} 
Near the bifurcation points $\mu = \mu_n(R_s)$, the shape 
of the symmetry-breaking free boundary is fully characterized by
Theorem \ref{bifur} (see also Remark \ref{rem1}). In this section we show that all these 
free boundary solutions can be fully recovered  by the neural network discretization. In particular, 
we compute the numerical solution with  Algorithm \ref{alg} near the bifurcation points $\mu_n$  by \re{bifurps} with $R_S = 1$, namely, 
$$\mu_2 \approx 14.7496, \hspace{2em} \mu_3 \approx 28.7234, \hspace{2em} \mu_4 \approx 47.1794, \hspace{2em} \mu_5 \approx 70.1169.$$
We choose $\mu$ in a small neighborhood of $\mu_n$, i.e., $|\mu - \mu_n|$ is small; correspondingly, $\beta = (\mu+1)\frac{I_1(1)}{I_0(1)}$ is uniquely determined by \re{beta}. For the neural network discretization, we set the number of neurons  $N=20$, 
the number of Monte Carlo integration points $m=4000$, $\tau = 10^{-3}$, the maximum number of iterations as 50, learning rate = $10^{-4}$, and the activation function $\Psi(\theta) = \cos(\theta)$. The initial parameters are set to be: $a_i$ is randomly chosen by $\mathbb{N}(0,1)$, $b_i = n$ (which corresponds to the $n$-mode bifurcation), $c_i = 0$, and $d=1$; in this way, the Neural network representation $\rho(\theta,\mathcal{X}) = \sum_{i=1}^N a_i \Psi(b_i\theta+c_i) + d$ is close to the form of free boundary for the symmetry-breaking solution \re{form}. Moreover, $\hat{\theta}_i$ are uniformly sampled from $[0,2\pi]$, and are divided into 20 mini-batches, with each mini-batch containing 200 points. Therefore, all the parameters are updated 20 times in one epoch.  The loss is shown in Figure \ref{lossplot}  while the shapes of symmetry-breaking solutions on different bifurcation branches are shown in  Figure \ref{bifurplot}  which is consistent with the theoretical results in Theorem \ref{bifur} and Remark \ref{rem1}.

\begin{figure}[ht]
\begin{subfigure}{.48\textwidth}
  \centering
  \includegraphics[width=.9\linewidth,trim={3cm 0.3cm 3cm 0.3cm}]{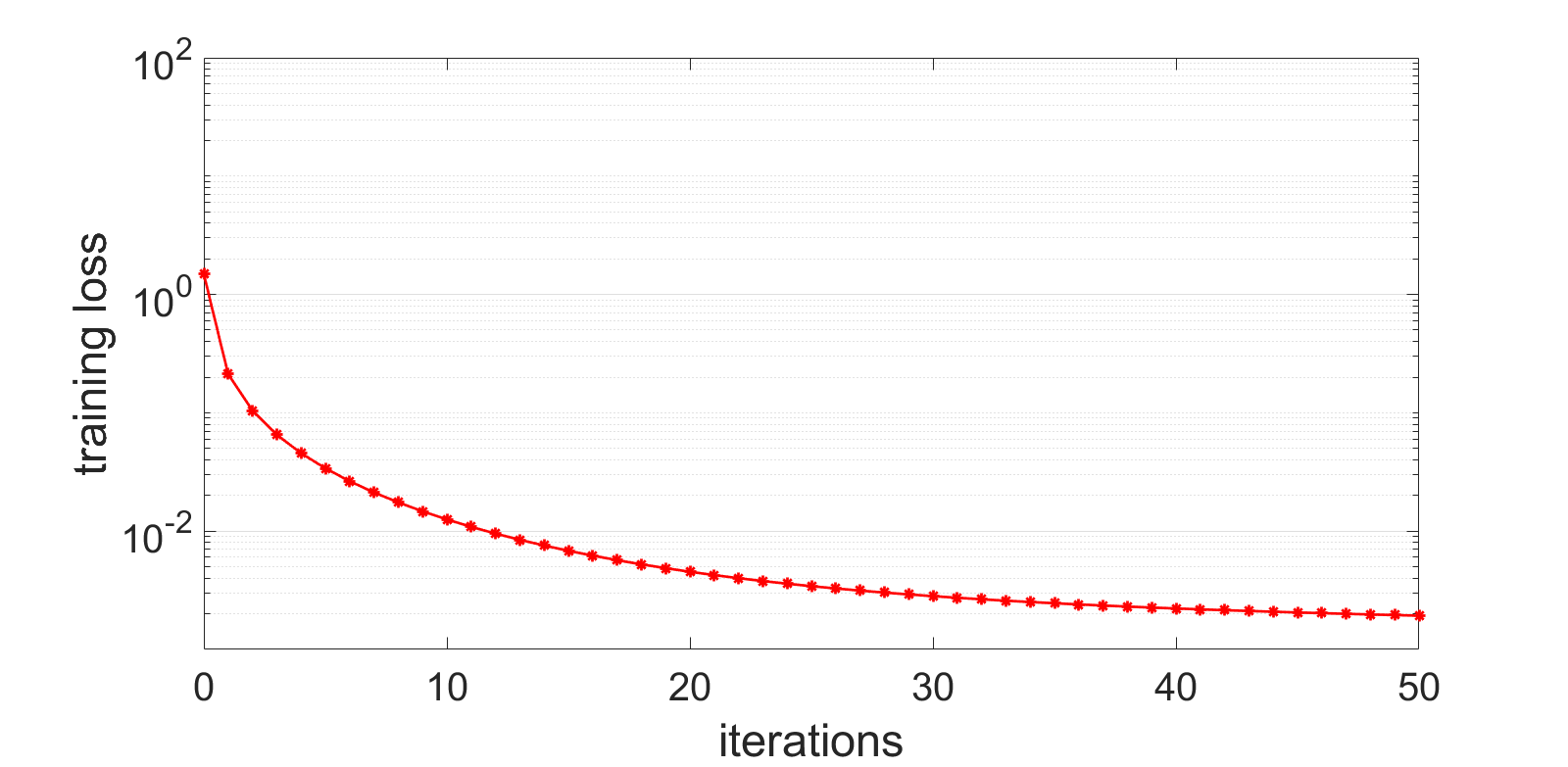}
  \caption{$n=2$ bifurcation, $\mu=14.6$.}
\end{subfigure}
\begin{subfigure}{.48\textwidth}
  \centering
  \includegraphics[width=.9\linewidth,trim={3cm 0.3cm 3cm 0.3cm}]{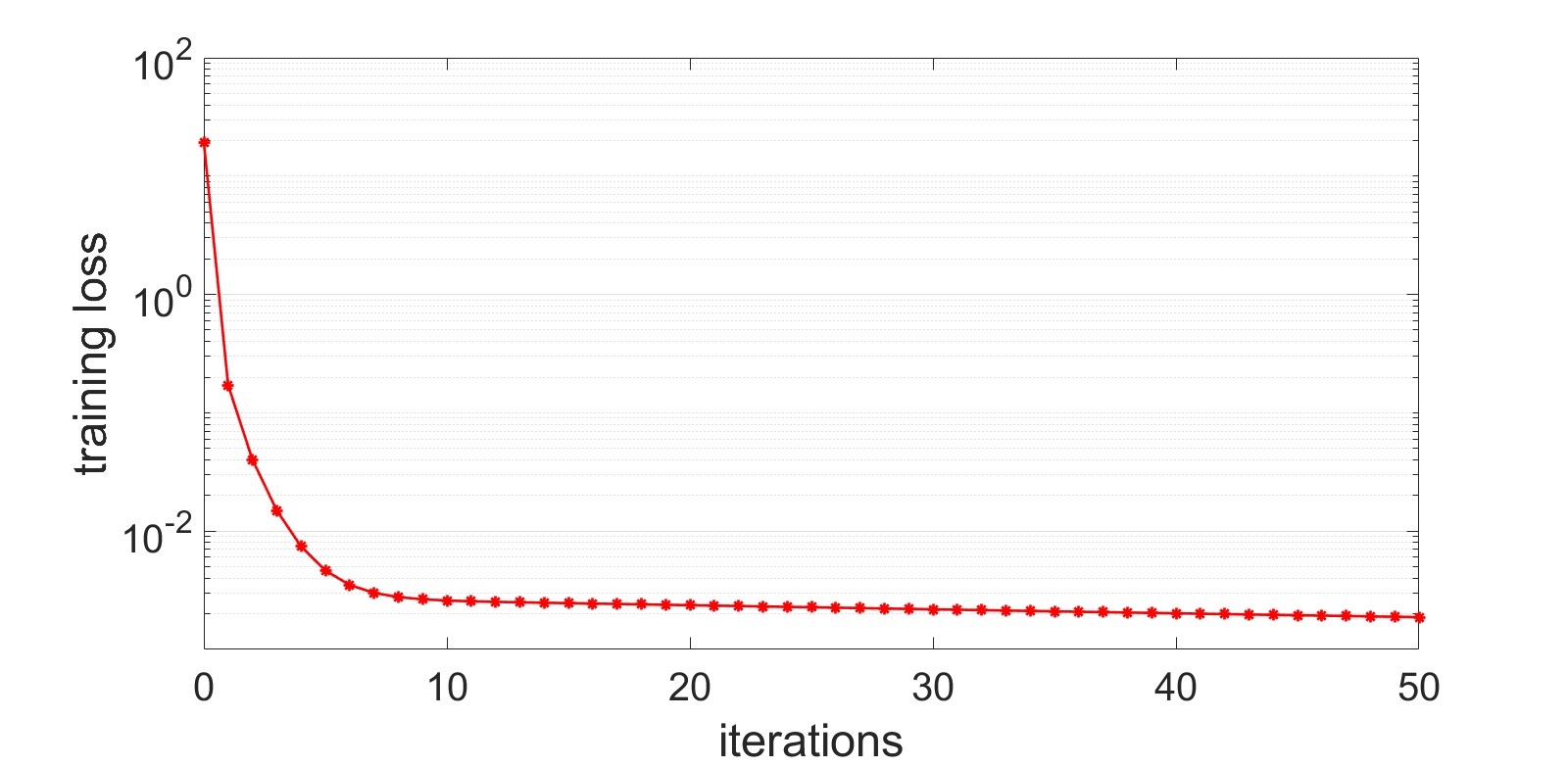}
  \caption{$n=3$ bifurcation, $\mu=28.6$.}
\end{subfigure}

\vspace{10pt}
\begin{subfigure}{.48\textwidth}
  \centering
  \includegraphics[width=.9\linewidth,trim={3cm 0.3cm 3cm 0.3cm}]{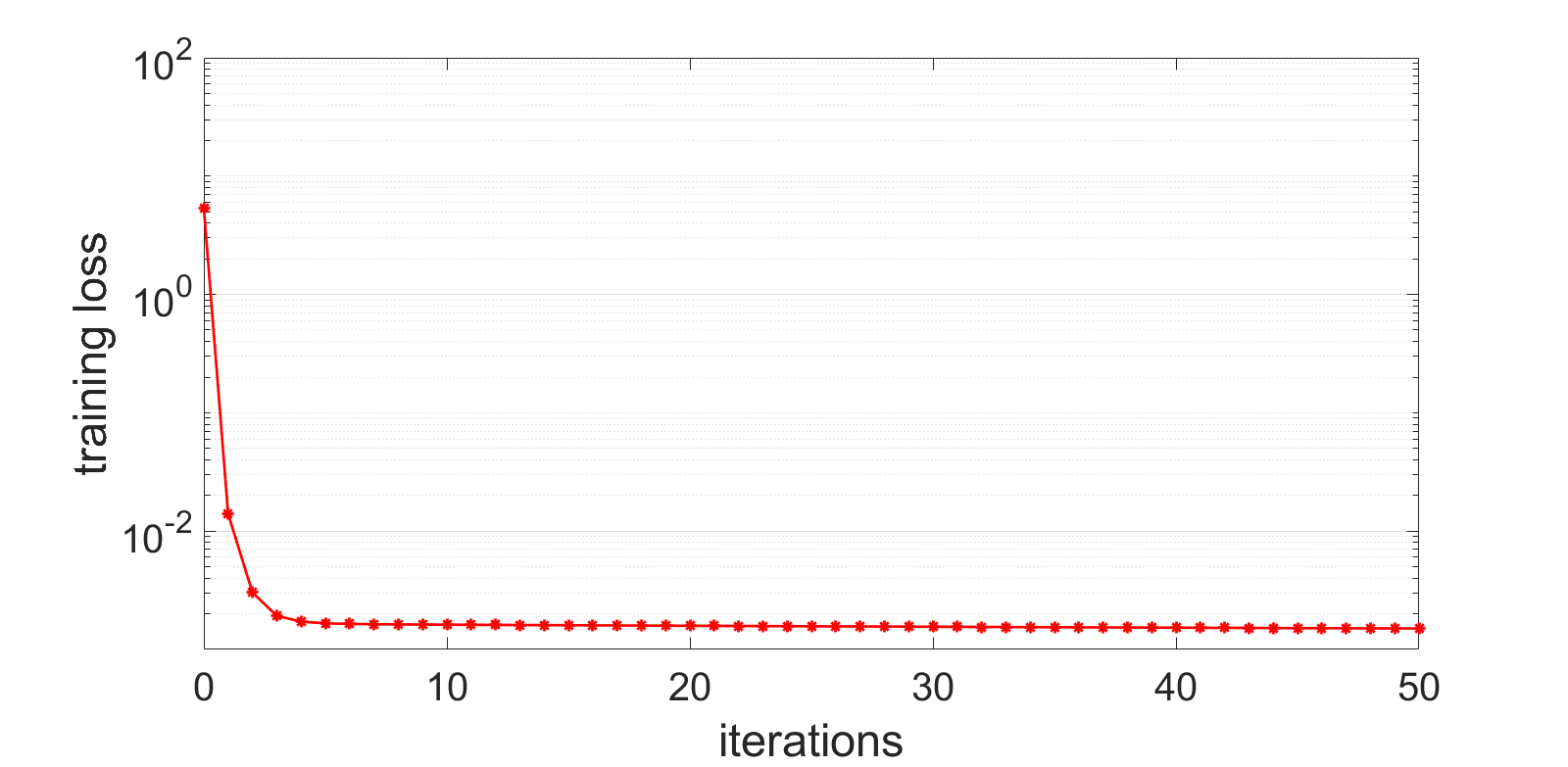}
  \caption{$n=4$ bifurcation, $\mu=47.0$.}
\end{subfigure}
\begin{subfigure}{.48\textwidth}
  \centering
  \includegraphics[width=.9\linewidth,trim={3cm 0.3cm 3cm 0.3cm}]{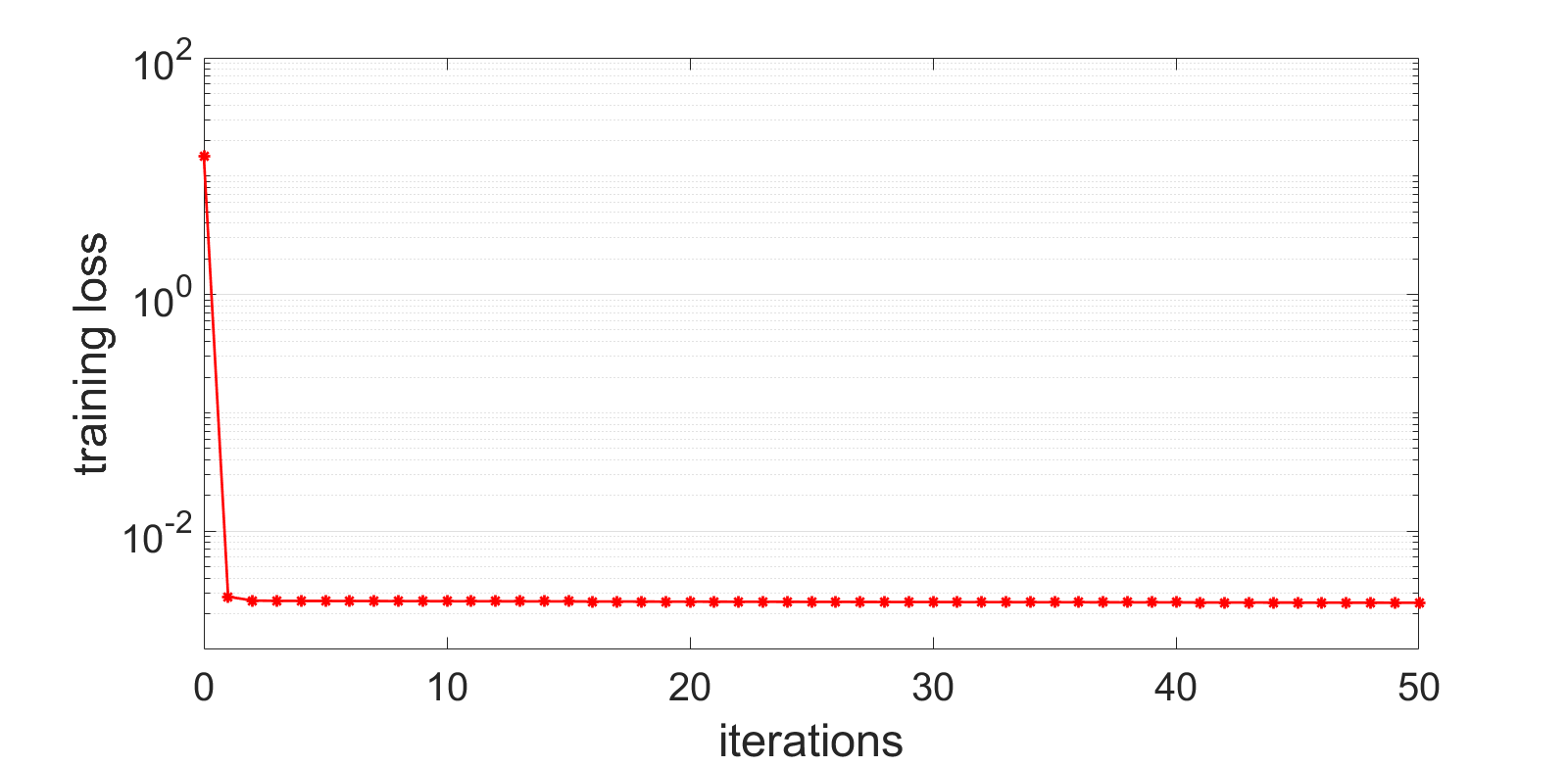}  
  \caption{$n=5$ bifurcation, $\mu=70.0$.}
\end{subfigure}
\caption{Training loss.}
\label{lossplot}
\end{figure}

\begin{figure}[ht]
\begin{subfigure}{.48\textwidth}
  \centering
  \includegraphics[width=.7\linewidth,trim={1cm 0.3cm 1cm 0.3cm}]{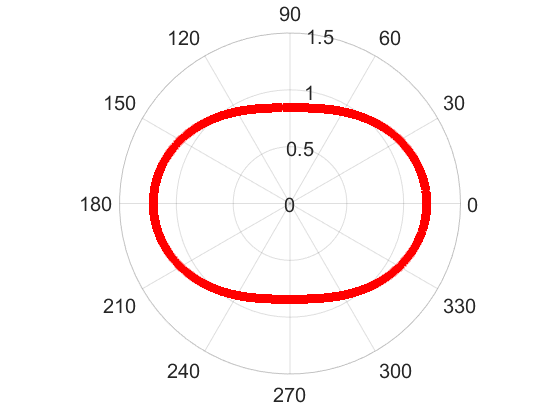}  
  \caption{$n=2$ bifurcation, $\mu=14.6$.}
\end{subfigure}
\begin{subfigure}{.48\textwidth}
  \centering
  \includegraphics[width=.7\linewidth,trim={1cm 0.3cm 1cm 0.3cm}]{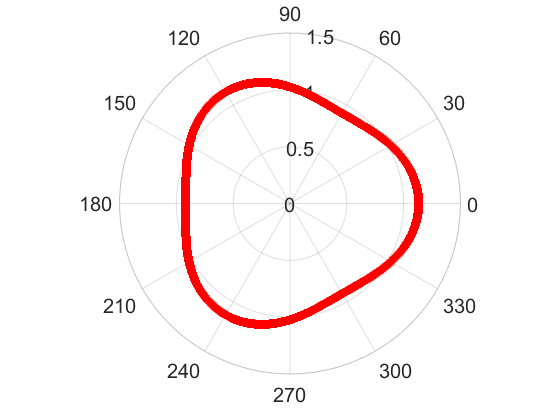}  
  \caption{$n=3$ bifurcation, $\mu=28.6$.}
\end{subfigure}

\vspace{10pt}
\begin{subfigure}{.48\textwidth}
  \centering
  \includegraphics[width=.7\linewidth,trim={1cm 0.3cm 1cm 0.3cm}]{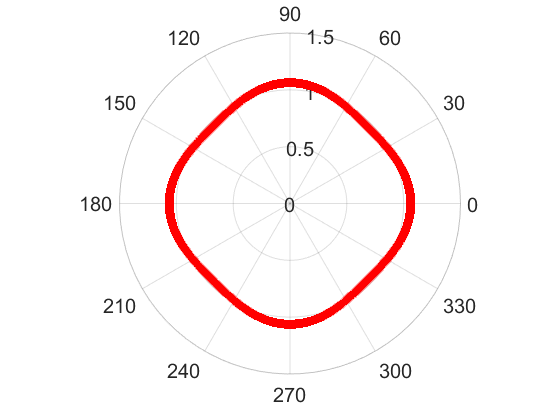}  
  \caption{$n=4$ bifurcation, $\mu=47.0$.}
\end{subfigure}
\begin{subfigure}{.48\textwidth}
  \centering
  \includegraphics[width=.7\linewidth,trim={1cm 0.3cm 1cm 0.3cm}]{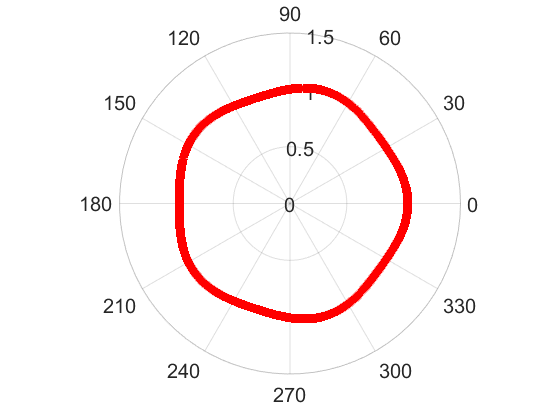}  
  \caption{$n=5$ bifurcation, $\mu=70.0$.}
\end{subfigure}
\caption{Contour plot of non-radially symmetric solutions in different bifurcation branches.}
\label{bifurplot}
\end{figure}

\subsection{Other non-radially symmetric solutions}
In this section we generate some non-radially symmetric solutions
that are not characterized by any theorems. Inspired by \cite{viscosity}, we try to find some fingering patterns, hence we choose the activation function $\Psi(\theta) = 0.3/[(\cos(\theta))^2 + (0.3\sin(\theta))^2]$, which generates fingering-like patterns. In particular, we take $\mu = 20$, $\beta = (\mu+1)\frac{I_1(1)}{I_0(1)}$, $N=20$, $m = 10000$, $\tau=10^{-3}$ and maximum number of iterations = 200. We divide 10000 random points $\hat{\theta}_i$ into 100 mini-batches, hence all the parameters are updated 100 times in one iteration. 

In Figure \ref{2f}, we initially choose  $b_i = 1$ and randomly choose  other parameters. The learning rate is $10^{-3}$ at first and is decreased gradually to $10^{-6}$. In Figure \ref{4f}, we take  $b_i = 2$, $c_i = 0$, random $a_i$, a random $d$, and a $10^{-5}$ learning rate. 
Compared with Figure \ref{lossplot}, the loss in Figures \ref{2f} and \ref{4f} are larger. It is due to the numerical error introduced by calculating the curvature at the tip of each finger and the connecting points between two adjacent fingers.

\begin{figure}[H]
\begin{subfigure}{.48\textwidth}
  \centering
  \includegraphics[height=1.5in,trim={2cm 0.3cm 2cm 0.3cm}]{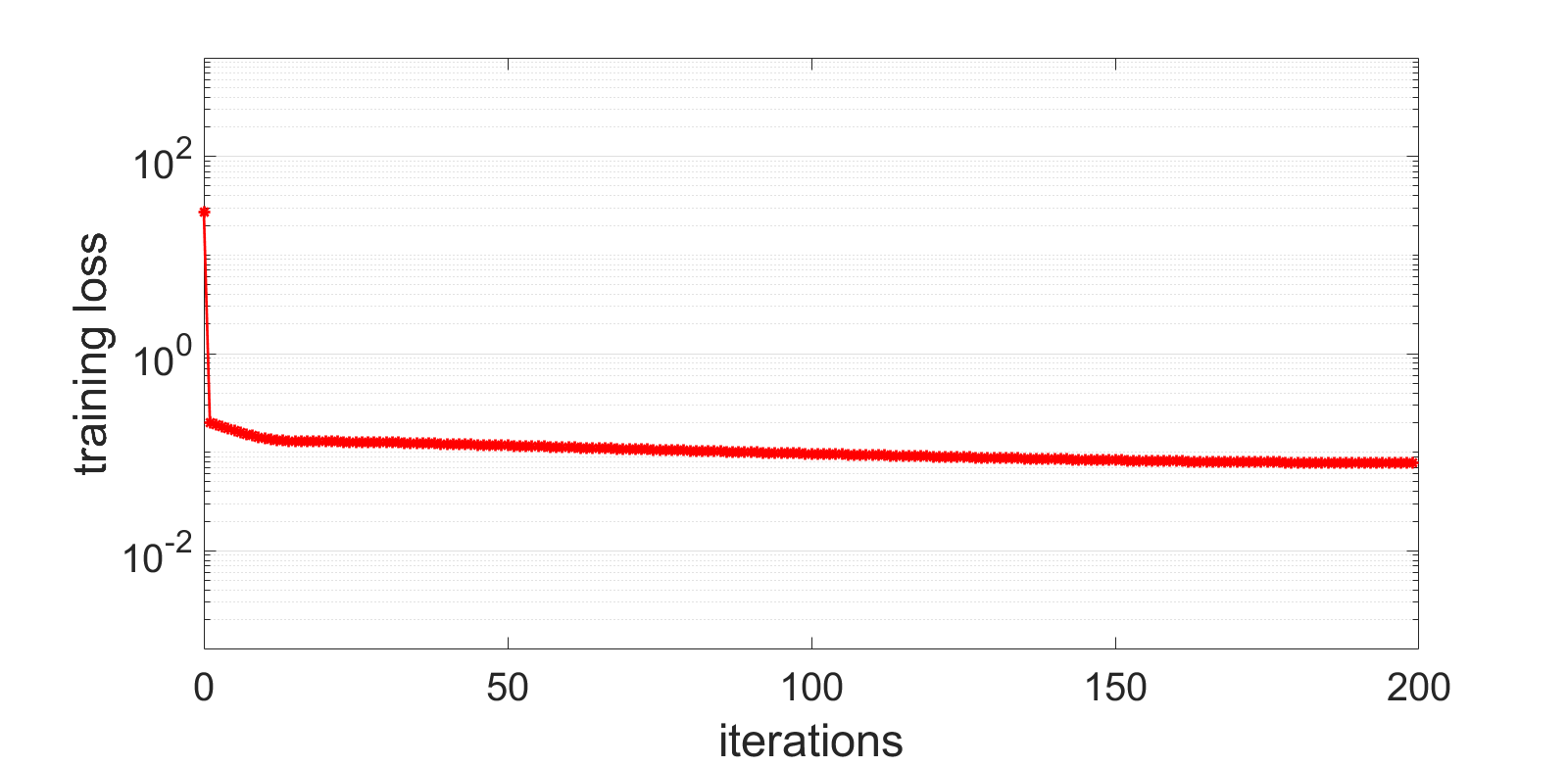}
  \caption{Training loss.}
\end{subfigure}
\begin{subfigure}{.48\textwidth}
  \centering
  \includegraphics[height=1.5in,trim={2cm 0.3cm 2cm 0.3cm}]{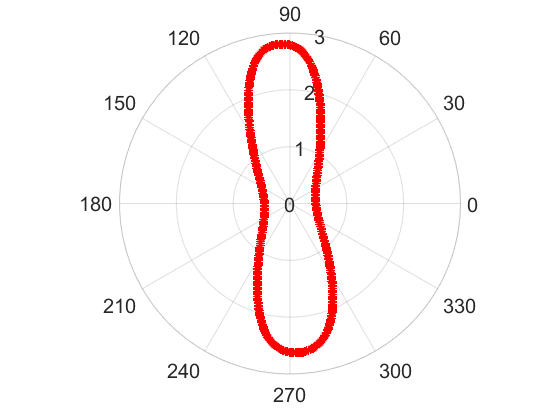}  
  \caption{Contour plot.}
\end{subfigure}
\caption{Non-radially symmetric solution with 2 fingers.}
\label{2f}
\end{figure}

\begin{figure}[ht]
\begin{subfigure}{.48\textwidth}
  \centering
  \includegraphics[height=1.5in,trim={2cm 0.3cm 2cm 0.3cm}]{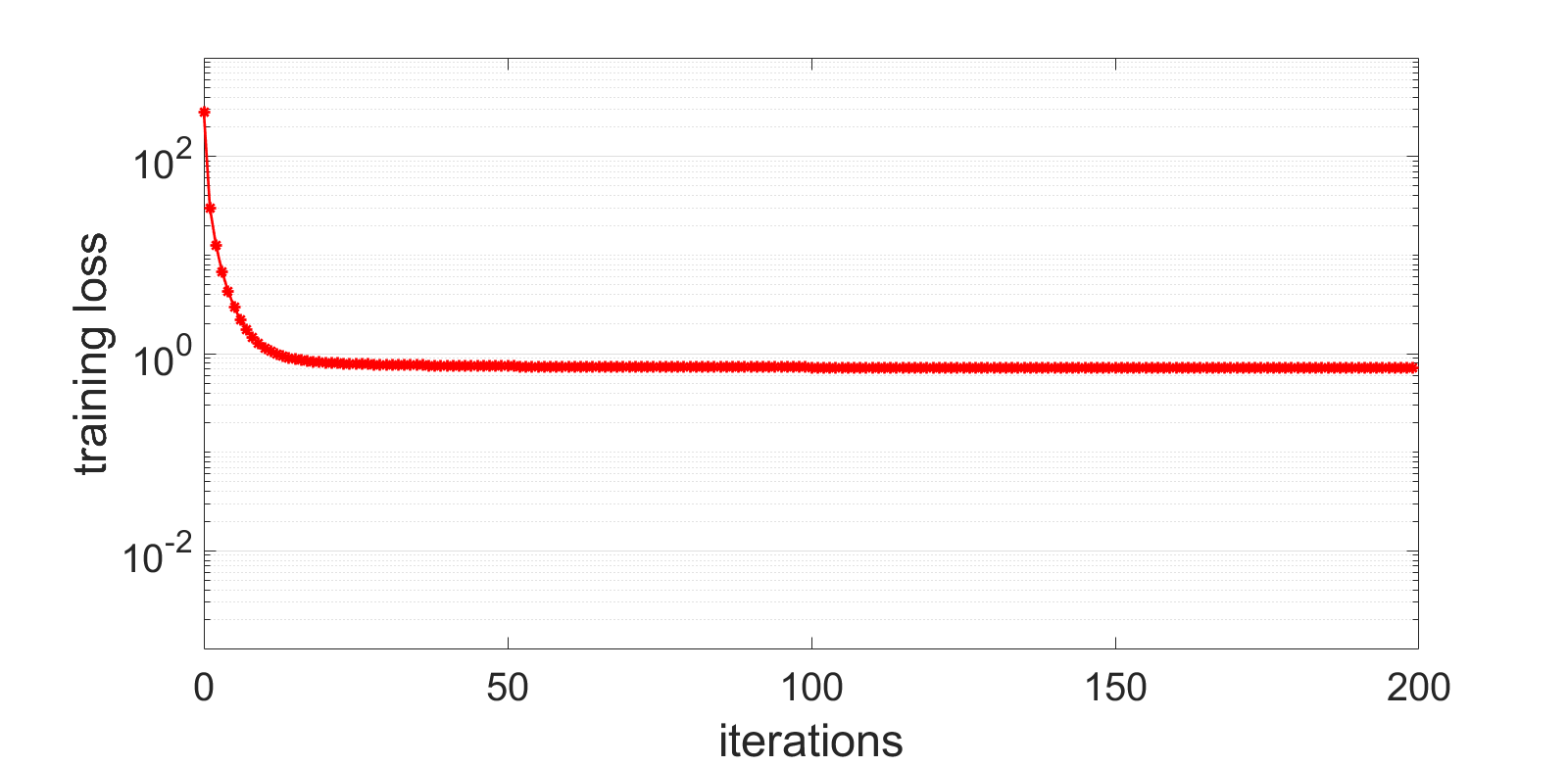}
  \caption{Training loss.}
\end{subfigure}
\begin{subfigure}{.48\textwidth}
  \centering
  \includegraphics[height=1.5in,trim={2cm 0.3cm 2cm 0.3cm}]{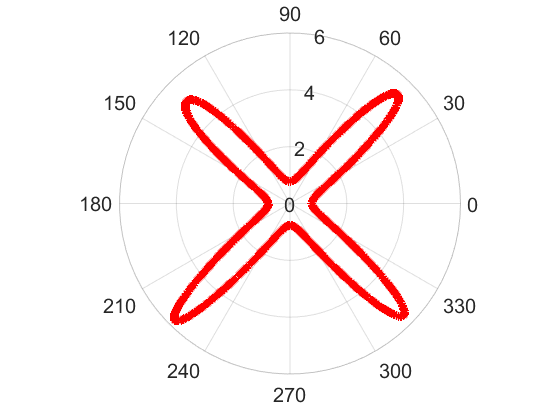}  
  \caption{Contour plot.}
\end{subfigure}
\caption{Non-radially symmetric solution with 4 fingers.}
\label{4f}
\end{figure}

\section{Conclusion}
We have developed a novel numerical method based on the neural network discretization for solving
a   modified Hele-Shaw model of PDE free boundary problem. 
We established theoretically the existence of the numerical
solution with this new discretization. Our simulations verify this new approach on
  radially symmetric and known 
non-radially symmetric solutions. Moreover, using this new method, we also found some new non-radially symmetric
solutions that were unknown based on the existing theories. 
In the future, we will apply this new numerical method to solve the more sophisticated free boundary problems such as tumor growth models or the atherosclerotic plaque formation models.

\section{Appendix}
\begin{thm}{\bf (Crandall-Rabinowitz theorem, \cite{crandall})}\label{bifurthm}
Let $X$, $Y$ be real Banach spaces and $F(\cdot,\cdot)$ a $C^p$ map, $p\ge 3$, of a neighborhood $(0,\mu_0)$ in $X \times \mathbb{R}$ into $Y$. Suppose
\begin{itemize}
\item[(1)] $F(0,\mu) = 0$ for all $\mu$ in a neighborhood of $\mu_0$,
\item[(2)] $\mathrm{Ker} \,F_x(0,\mu_0)$ is one dimensional space, spanned by $x_0$,
\item[(3)] $\mathrm{Im} \,F_x(0,\mu_0)=Y_1$ has codimension 1,
\item[(4)] $F_{\mu x}(0,\mu_0) x_0 \notin Y_1$.
\end{itemize}
Then $(0,\mu_0)$ is a bifurcation point of the equation $F(x,\mu)=0$ in the following sense: In a neighborhood of $(0,\mu_0)$ the set of solutions $F(x,\mu) =0$ consists of two $C^{p-2}$ smooth curves $\Gamma_1$ and $\Gamma_2$ which intersect only at the point $(0,\mu_0)$; $\Gamma_1$ is the curve $(0,\mu)$ and $\Gamma_2$ can be parameterized as follows:
$$\Gamma_2: (x(\varepsilon),\mu(\varepsilon)), |\varepsilon| \text{ small, } (x(0),\mu(0))=(0,\mu_0),\; x'(0)=x_0.$$
\end{thm}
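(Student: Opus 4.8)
The plan is to prove this by the classical \emph{Lyapunov--Schmidt reduction}, which converts the abstract equation $F(x,\mu)=0$ into a single scalar equation in two real variables, after which two applications of the implicit function theorem finish the argument. First I would fix the functional-analytic splittings dictated by (2) and (3). Since $\mathrm{Ker}\,F_x(0,\mu_0)=\mathrm{span}\{x_0\}$ is one-dimensional it is complemented, so write $X=\mathbb{R}x_0\oplus X_2$ with $X_2$ closed; since $Y_1=\mathrm{Im}\,F_x(0,\mu_0)$ has codimension one (and is closed, $F_x(0,\mu_0)$ being Fredholm of index zero) write $Y=Y_1\oplus\mathbb{R}y_0$ with $y_0\notin Y_1$, and let $P:Y\to Y_1$, $Q=I-P:Y\to\mathbb{R}y_0$ be the associated continuous projections. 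Writing $x=sx_0+w$ with $s\in\mathbb{R}$, $w\in X_2$, the equation $F(x,\mu)=0$ is equivalent to the pair $PF(sx_0+w,\mu)=0$ and $QF(sx_0+w,\mu)=0$.

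Next I would solve the infinite-dimensional half. The map $\Phi(s,w,\mu)=PF(sx_0+w,\mu)$ is $C^p$ into $Y_1$ with $\Phi(0,0,\mu_0)=0$ by (1), and its derivative $\Phi_w(0,0,\mu_0)=PF_x(0,\mu_0)\big|_{X_2}$ is an isomorphism of $X_2$ onto $Y_1$: it is injective because $\mathrm{Ker}\,F_x(0,\mu_0)\cap X_2=\{0\}$, and surjective because $P$ is the identity on $Y_1=\mathrm{Im}\,F_x(0,\mu_0)$. The implicit function theorem then yields a $C^p$ map $w=w(s,\mu)$ near $(0,\mu_0)$ with $w(0,\mu_0)=0$ and $PF(sx_0+w(s,\mu),\mu)\equiv0$; moreover (1) forces $w(0,\mu)\equiv0$ by the uniqueness clause, since $x=0$ already solves $PF=0$ at $s=0$.

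Substituting $w(s,\mu)$ into the remaining equation gives the scalar \emph{bifurcation equation}. Writing $QF(sx_0+w(s,\mu),\mu)=\psi(s,\mu)\,y_0$ defines a $C^p$ scalar $\psi$, and locally $F(x,\mu)=0$ holds exactly when $\psi(s,\mu)=0$. By (1), $\psi(0,\mu)\equiv0$, so the division (Hadamard) lemma gives $\psi(s,\mu)=s\,\phi(s,\mu)$ with $\phi$ of class $C^{p-1}$. Hence the local solution set is $\{s=0\}\cup\{\phi(s,\mu)=0\}$: the first piece is the trivial branch $\Gamma_1=\{(0,\mu)\}$, and $\Gamma_2$ must arise from $\phi=0$.

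The crux is to analyze $\phi$ at $(0,\mu_0)$ and exhibit a transversal crossing. Differentiating $PF(sx_0+w,\mu)=0$ in $s$ at $(0,\mu_0)$, using $F_x(0,\mu_0)x_0=0$ and the isomorphism property, yields $w_s(0,\mu_0)=0$. Differentiating $\psi(s,\mu)y_0=QF(sx_0+w,\mu)$ first in $s$ then in $\mu$ at $(0,\mu_0)$ then collapses to $\phi(0,\mu_0)=\psi_s(0,\mu_0)=0$ and $\phi_\mu(0,\mu_0)=\psi_{s\mu}(0,\mu_0)$ with $\psi_{s\mu}(0,\mu_0)\,y_0=Q\,F_{\mu x}(0,\mu_0)x_0$, the $w_{s\mu}$-term dropping because $F_x(0,\mu_0)$ maps into $Y_1=\ker Q$ and the $w_s$-term dropping because $w_s(0,\mu_0)=0$. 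Assumption (4), $F_{\mu x}(0,\mu_0)x_0\notin Y_1$, is exactly $Q F_{\mu x}(0,\mu_0)x_0\neq0$, so $\phi_\mu(0,\mu_0)\neq0$; the implicit function theorem applied to $\phi=0$ solves $\mu=\mu(s)$ near $s=0$ with $\mu(0)=\mu_0$, giving $\Gamma_2:(x(s),\mu(s))=(sx_0+w(s,\mu(s)),\mu(s))$, and $w(0,\mu)\equiv0$, $w_s(0,\mu_0)=0$ give $x(0)=0$, $x'(0)=x_0$. Since $\phi_\mu(0,\mu_0)\neq0$, the crossing $\phi(0,\mu)=0$ occurs only at $\mu=\mu_0$, so $\Gamma_1$ and $\Gamma_2$ meet only at $(0,\mu_0)$, and tracking the derivative loss through the division lemma and the two implicit-function-theorem steps gives the claimed $C^{p-2}$ smoothness. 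I expect the main obstacle to be precisely the bookkeeping of this last paragraph: establishing $w_s(0,\mu_0)=0$ and carrying out the two-fold differentiation so that (4) emerges cleanly as $\psi_{s\mu}(0,\mu_0)\neq0$, since the whole content of the theorem is concentrated in showing that the reduced equation crosses the trivial branch transversally.
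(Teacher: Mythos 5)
Your proposal is correct, but there is nothing in the paper to compare it against: Theorem \ref{bifurthm} is stated in the Appendix purely as a quoted tool from \cite{crandall}, and the paper gives no proof of it (its own original arguments consist of \emph{verifying} the four hypotheses for the map $\mathcal{F}$ of \re{mathcalF}). Your two-step Lyapunov--Schmidt argument is the standard textbook proof and its key steps all check out: the splittings $X=\mathbb{R}x_0\oplus X_2$ and $Y=Y_1\oplus\mathbb{R}y_0$, the isomorphism $PF_x(0,\mu_0)|_{X_2}\colon X_2\to Y_1$, the identities $w(0,\mu)\equiv 0$ and $w_s(0,\mu_0)=0$, and the collapse $\psi_{s\mu}(0,\mu_0)\,y_0=QF_{\mu x}(0,\mu_0)x_0\neq 0$, which is exactly where hypothesis (4) enters. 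It is worth noting that the original proof in \cite{crandall} is organized differently and more compactly: using (1) one factors directly $F(s(x_0+z),\mu)=s\,g(s,z,\mu)$ with $g(s,z,\mu)=\int_0^1 F_x\bigl(ts(x_0+z),\mu\bigr)(x_0+z)\,dt$, and applies the implicit function theorem \emph{once} to $g=0$ in the variables $(z,\mu)\in X_2\times\mathbb{R}$, since (2)--(4) make the map $(z,\mu)\mapsto F_x(0,\mu_0)z+\mu F_{\mu x}(0,\mu_0)x_0$ an isomorphism of $X_2\times\mathbb{R}$ onto $Y$; your version buys a cleaner display of the trivial branch and a scalar transversality condition $\phi_\mu(0,\mu_0)\neq0$, while the one-shot version avoids the projections and the bifurcation function altogether. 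Two small points to tighten: closedness of $Y_1$ should not be justified by declaring $F_x(0,\mu_0)$ ``Fredholm of index zero'' (that presupposes closed range) but by the standard lemma that a bounded operator whose range has finite codimension has closed range; and your regularity bookkeeping actually yields $C^{p-1}$ curves ($\psi$ is $C^p$, the Hadamard division costs one derivative, the implicit function theorem costs none), which is stronger than the $C^{p-2}$ claimed in the statement, so no further loss needs to be tracked.
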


\bigskip

\end{document}